\newcommand{\nc}{\newcommand}
\numberwithin{equation}{section}
\newtheorem{thm}{Theorem}[section]
\newtheorem{prop}[thm]{Proposition}
\newtheorem{lem}[thm]{Lemma}
\newtheorem{cor}[thm]{Corollary}
\theoremstyle{remark}
\newtheorem{rem}[thm]{Remark}
\newtheorem{example}[thm]{Example}
\newtheorem{conj}[thm]{Conjecture}
\nc{\gl}{\mathfrak{gl}}
\nc{\GL}{\mathfrak{GL}}
\nc{\g}{\mathfrak{g}}
\nc{\gh}{\widehat\g}
\nc{\h}{\mathfrak{h}}
\nc{\la}{\lambda}
\nc{\al}{\alpha }
\nc{\ve}{\varepsilon }
\nc{\om}{\omega }
\nc{\ta}{\theta}
\nc{\veps}{\varepsilon}
\nc{\ch}{{\mathop {\rm ch}}}
\nc{\Tr}{{\mathop {\rm Tr}\,}}
\nc{\Id}{{\mathop {\rm Id}}}
\nc{\ad}{{\mathop {\rm ad}}}
\nc{\bra}{\langle}
\nc{\ket}{\rangle}
\nc{\x}{{\bf x}}
\nc{\bs}{{\bf s}}
\nc{\bp}{{\bf p}}
\nc{\bc}{{\bf c}}
\nc{\pa}{\partial}
\nc{\ld}{\ldots}
\nc{\cd}{\cdots}
\nc{\hk}{\hookrightarrow}
\nc{\T}{\otimes}
\newcommand{\bea}{\begin{equation}}
\newcommand{\ena}{\end{equation}}
\nc{\gr}{\mathrm{gr}}
\nc{\ov}{\overline}
\nc{\cO}{\mathcal O}
\nc{\cF}{\mathcal F}
\nc{\cL}{\mathcal L}
\nc{\msl}{\mathfrak{sl}}
\nc{\mgl}{\mathfrak{gl}}
\nc{\U}{\mathrm U}
\nc{\V}{\EuScript V}
\nc{\bH}{\EuScript H}
\nc{\Res}{\mathrm{Res\ }}
\newcommand{\bZ}{{\mathbb Z}}
\newcommand{\dimv}{{\bf dim}}
\begin{document}
\title[Homological approach to the Hernandez-Leclerc construction]
{Homological approach to the Hernandez-Leclerc construction and quiver varieties}
\author{Giovanni Cerulli Irelli, Evgeny Feigin, Markus Reineke}
\address{Giovanni Cerulli Irelli:\newline
Mathematisches Institut, Universit\"at Bonn, Bonn, Germany 53115}
\email{cerulli.math@googlemail.com}
\address{Evgeny Feigin:\newline
Department of Mathematics,\newline National Research University Higher School of Economics,\newline
Russia, 117312, Moscow, Vavilova str. 7\newline
{\it and }\newline
Tamm Department of Theoretical Physics,
Lebedev Physics Institute
}
\email{evgfeig@gmail.com}
\address{Markus Reineke:\newline
Fachbereich C - Mathematik, Bergische Universit\"at Wuppertal, D - 42097 Wuppertal}
\email{reineke@math.uni-wuppertal.de}

\begin{abstract}
In a previous paper the authors have attached to each Dynkin quiver an associative algebra. 
The definition is categorical and the algebra is used to construct  
desingularizations of arbitrary quiver Grassmannians. In the present paper we prove that this algebra is isomorphic to an algebra constructed by Hernandez-Leclerc defined combinatorially and used to describe certain
graded Nakajima quiver varieties. This approach is used to get an explicit realization of the 
orbit closures of representations of Dynkin quivers as affine quotients.  
\end{abstract}

\maketitle

\section{Introduction}
Let $Q$ be a Dynkin quiver and let  $k$ be an algebraically 
closed field of characteristic zero. In \cite{CFR3} the authors have defined an algebra  $B_Q$.
The definition of $B_Q$ is of categorical nature and in principle can be applied not only to the path
algebra of a Dynkin quiver, but to any algebra with finite number of indecomposable representations.
The algebra $B_Q$ plays a
crucial role in the construction of desingularizations of quiver Grassmannians for the quiver $Q$
(see also \cite{CFR1}, \cite{FF}). 
Roughly speaking, given a representation $M$ of the path algebra $kQ$ and a dimension vector ${\bf e}$, 
one can construct a $B_Q$-module $\widehat{M}$ and a dimension vector $\widehat{\bf e}$ of the Gabriel quiver 
of $B_Q$. Then the quiver
Grassmannian ${\rm Gr}_{\bf e}(M)$ can be desingularized by means of the quiver Grassmannian 
${\rm Gr}_{\widehat{\bf e}}({\widehat M})$ of the algebra 
$B_Q$ (if ${\rm Gr}_{\bf e}M$ is not irreducible, then one needs several such quiver Grassmannians).
A central role in the definition of $B_Q$ is played by a certain category of embeddings between projective modules 
of $Q$. This approach allows usage of homological tools for the study of the geometry of various 
objects related to a quiver, such as representation varieties, orbit closures, quiver Grassmannians, etc.
In this paper we continue the study of the algebras $B_Q$. Let us briefly describe our main results.

Let $\widehat{Q}$ be the Gabriel quiver of the algebra $B_Q$. In \cite{CFR3} the quiver $\widehat{Q}$ was
described in terms of the representation theory of $kQ$. For example, $\widehat{Q}$ has two types of
vertices: vertices of the first type are labelled by the non-projective indecomposable $kQ$-modules,
and vertices of the second type are in one-to-one correspondence with the vertices of the initial
quiver $Q$. The arrows of $\widehat{Q}$ can be also explicitly described in representation-theoretic terms. 
By definition, there exists an
admissible ideal $I$ such that  $B_Q=k\widehat{Q}/I$. One of our goals is to describe the ideal $I$.

In \cite[Section 9]{HL}, Hernandez and Leclerc introduced an algebra $\widetilde\Lambda_Q$,
defined by an explicit combinatorial description of its Gabriel quiver $\widetilde\Gamma_Q$ and by an explicit set 
of relations.  This algebra was used to provide    
a realization of certain graded Nakajima quiver varieties
via the representation varieties of the algebra $\widetilde\Lambda_Q$. More precisely, the graded quiver varieties
in question are isomorphic to the representation varieties $R_{\bf d}(Q)$ of the quiver $Q$, and they 
are realized as affine quotients of representation varieties of $\widetilde\Lambda_Q$.      
Our first result is:
\begin{thm}\label{1}
The algebras $B_Q$ and $\widetilde\Lambda_Q$ are isomorphic. In particular, the quiver $\widehat Q$ is isomorphic to 
$\widetilde\Gamma_Q$.
\end{thm}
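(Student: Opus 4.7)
The strategy is to build the isomorphism in two stages: first identify the Gabriel quivers $\widehat Q$ and $\widetilde\Gamma_Q$, and only then verify that the admissible ideal $I$ defining $B_Q=k\widehat{Q}/I$ matches the Hernandez-Leclerc ideal in $k\widetilde\Gamma_Q$. Since the paper \cite{CFR3} already describes $\widehat Q$ in representation-theoretic terms (vertices of type one are the non-projective indecomposables, vertices of type two are the simple projective tops indexed by vertices of $Q$), the plan is to transport the Hernandez-Leclerc labelling of $\widetilde\Gamma_Q$, which is combinatorial and essentially indexed by coordinates $(i,r)$ on the Auslander-Reiten quiver of $kQ$, into representation-theoretic language. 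The Auslander-Reiten translate $\tau$ identifies the non-projective indecomposables with $(i,r)$ for $r\geq 1$, while the second-type vertices correspond to $(i,0)$, and the two classes of arrows of $\widetilde\Gamma_Q$ (the horizontal ones coming from arrows of $Q$ and the vertical ones $(i,r)\to (i,r-1)$) should match the arrows of $\widehat Q$ described in \cite{CFR3} in terms of irreducible morphisms and AR-sequences. This identification is essentially a bookkeeping exercise once the correct dictionary is in place.

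The core of the proof is the identification of relations. The plan is to extract relations of $B_Q$ from its categorical description: $B_Q$ is built from the category of embeddings between projective $kQ$-modules, so morphisms in this category compose according to composition of honest module morphisms, and every commuting square of projective embeddings produces a mesh-type relation in $B_Q$. The key technical step is to compute these commuting squares explicitly on indecomposable embeddings and check that one recovers both the \emph{mesh relations} (sum of paths through an almost split sequence vanishes) and the \emph{commutation relations} expressing that the vertical maps intertwine with the horizontal ones. These are exactly the defining relations of $\widetilde\Lambda_Q$ in \cite[\S9]{HL}.

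Having produced a surjective homomorphism $\varphi\colon \widetilde\Lambda_Q\twoheadrightarrow B_Q$ (since the relations of $\widetilde\Lambda_Q$ hold in $B_Q$), injectivity can be established by a dimension count or by producing an explicit basis of $B_Q$ parametrized by the same combinatorial data that Hernandez-Leclerc use to give a basis of $\widetilde\Lambda_Q$. A convenient route is to compare $\dim_k e_x B_Q e_y$ with $\dim_k e_x \widetilde\Lambda_Q e_y$ for each pair of vertices $x,y$, using on one side the explicit HL basis and on the other side the description of hom-spaces between projective embeddings via morphisms in the derived or module category of $kQ$.

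The main obstacle will be the relation-matching step. The categorical definition of $B_Q$ does not hand us relations in closed form; one must identify which compositions of arrows of $\widehat Q$ are forced to vanish or to coincide by composition of morphisms between projective embeddings, and in particular verify the mesh relations. The careful point is that the mesh relation in $\widetilde\Lambda_Q$ mixes horizontal and vertical arrows with signs, and these signs must be shown to emerge naturally from the categorical composition; this likely requires choosing compatible normalizations for the irreducible morphisms in the AR-quiver and tracking them through the embedding category. Once this sign-compatible identification is fixed, the rest of the argument should follow formally.
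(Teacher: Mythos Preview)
Your quiver identification step is essentially what the paper does, though the paper works through Hernandez--Leclerc's bijection $\varphi:\widehat I\to\triangle_+\times\bZ$ and the Coxeter element rather than an $(i,r)$-coordinate on the AR-quiver; either bookkeeping scheme is fine, and the translation between them is routine once one uses that $C$ acts as $\tau$ on dimension vectors.

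The real divergence is in how the relations are handled. You propose to verify that the Hernandez--Leclerc relations hold in $B_Q$ (yielding a surjection $\widetilde\Lambda_Q\twoheadrightarrow B_Q$) and then close the argument by a dimension comparison of the idempotent-truncated pieces $e_xB_Qe_y$ versus $e_x\widetilde\Lambda_Qe_y$. The paper avoids this entirely: because $B_Q$ has global dimension at most two, Bongartz's criterion pins down the \emph{number and location} of defining relations via ${\rm Ext}^2$ between simple $B_Q$-modules. Using the projective resolution of simples from \cite{CFR3}, one computes that ${\rm Ext}^2(S_{[U]},S_{[V]})$ vanishes unless $V=\tau U$, in which case it is one-dimensional; so there is exactly one relation from $[U]$ to $[\tau U]$ and no others. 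Identifying that single relation (a mesh relation, or a commutativity square when a vertex $[j]$ sits between $[U]$ and $[\tau U]$) then finishes the proof with no injectivity or dimension-count step needed. This is cleaner and more conceptual than your surjection-plus-count route, which is correct in principle but leaves the dimension comparison as genuine work.

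On the sign issue you flag: the paper sidesteps it by first observing (via an inductive twist of the arrows $B_{ij}(p)$) that $\widetilde\Lambda_Q$ admits a sign-free presentation, and then matches those sign-free relations with the ones in $B_Q$. This is simpler than trying to see the HL signs emerge from a choice of normalizations of irreducible maps, which would also work but is more delicate.
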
 

We also rederive the Hernandez-Leclerc realization of the representation varieties $R_{\bf d}(Q)$ of the quiver 
$Q$ as affine quotients of representation varieties of $B_Q$. Actually, our approach allows to strengthen 
the results of \cite{HL}. Namely, 
in \cite[Section 9]{HL}, the authors show that not only the representation varieties $R_{\bf d}(Q)$ are isomorphic to certain graded quiver varieties, but moreover the stratification of $R_{\bf d}(Q)$
by the orbits of the structure group $G_{\bf d}$ coincides with the Nakajima stratification. In other words, this means 
that the closure of any $G_{\bf d}$-orbit is isomorphic to the quotient of some representation variety of the 
algebra $\widetilde\Lambda_Q=B_Q$. Our approach allows to describe this representation variety explicitly.
Namely, for a ${\bf d}$-dimensional $kQ$-module $M$ recall the $k\widehat{Q}$-module $\widehat{M}$ 
(used in the construction
of desingularizations of quiver Grassmannians). Let ${\widehat{\bf d}}$ be the dimension vector of $\widehat{M}$.
We prove:
\begin{thm}\label{2} We have the following quotient descriptions of orbit closures:
\begin{enumerate}
\item The closure of the $G_{\bf d}$-orbit of $M$ is isomorphic to an affine quotient of the
variety of $B_Q$-representations of dimension vector ${\widehat{\bf d}}$. 
\item The closure of the $G_{\bf d}$-orbit of $M$ is isomorphic to an affine quotient of the
$G_{\widehat{\bf d}}$-orbit of $\widehat{M}$.
\end{enumerate}
\end{thm}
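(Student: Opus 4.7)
The plan is to combine Theorem~\ref{1} with the Hernandez--Leclerc realization of \cite{HL}, and then to strengthen it by exploiting the universal construction $M\mapsto \widehat{M}$ from \cite{CFR3}. By Theorem~\ref{1}, $B_Q\cong\widetilde\Lambda_Q$, so the HL description provides a morphism
\[
\pi_{\widehat{\bf d}}\colon R_{\widehat{\bf d}}(B_Q)\longrightarrow R_{\bf d}(Q),
\]
which is equivariant with respect to a natural homomorphism $G_{\widehat{\bf d}}\to G_{\bf d}$ and whose generic fibres are orbits of the reductive group $H_{\widehat{\bf d}}:=\ker(G_{\widehat{\bf d}}\to G_{\bf d})$. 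By construction of $\widehat{M}$, the point $\pi_{\widehat{\bf d}}(\widehat{M})$ lies in the $G_{\bf d}$-orbit of $M$.

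For part (1), I would identify the image of $\pi_{\widehat{\bf d}}$ with $\overline{G_{\bf d}\cdot M}$: the dimension vector $\widehat{\bf d}$ is chosen precisely to encode the Nakajima stratum labelled by the isomorphism class of $M$, and the refined HL stratification theorem recalled in the introduction then matches the closure of this stratum with $\overline{G_{\bf d}\cdot M}$. Since $\pi_{\widehat{\bf d}}$ realises a categorical (hence affine) quotient by $H_{\widehat{\bf d}}$, this produces the desired isomorphism
\[
\overline{G_{\bf d}\cdot M}\;\cong\;R_{\widehat{\bf d}}(B_Q)/\!/H_{\widehat{\bf d}}.
\]

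For part (2), the goal is to replace the entire representation variety $R_{\widehat{\bf d}}(B_Q)$ by the closed subvariety $\overline{G_{\widehat{\bf d}}\cdot\widehat{M}}$ and obtain the same affine quotient. The containment $\pi_{\widehat{\bf d}}(\overline{G_{\widehat{\bf d}}\cdot\widehat{M}})\subseteq\overline{G_{\bf d}\cdot M}$ is immediate from equivariance and continuity. The content is surjectivity: for every degeneration $N$ of $M$, one must produce a $B_Q$-module $N'$ of dimension vector $\widehat{\bf d}$ lying in $\overline{G_{\widehat{\bf d}}\cdot\widehat{M}}$ with $\pi_{\widehat{\bf d}}(N')\cong N$. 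The strategy is to lift a one-parameter family $M\rightsquigarrow N$ inside $R_{\bf d}(Q)$ to a one-parameter family inside $R_{\widehat{\bf d}}(B_Q)$ starting at $\widehat{M}$, using functoriality of $M\mapsto\widehat{M}$ from \cite{CFR3} together with upper semicontinuity of the Hom- and Ext-numerical invariants that enter the definition of $\widehat{\bf d}$; a restricted version of the HL quotient then identifies $\overline{G_{\bf d}\cdot M}$ with the affine quotient of $\overline{G_{\widehat{\bf d}}\cdot\widehat{M}}$ by the induced $H_{\widehat{\bf d}}$-action.

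The main obstacle is exactly this surjectivity in part (2). A priori the assignment $N\mapsto \widehat{N}$ is not constant along degenerations, because the structure of $\widehat{N}$ depends on homological data (such as ranks of maps between indecomposables) that can jump in families; so one cannot simply apply the construction pointwise to a degeneration and hope to stay in a fixed dimension vector. I expect the resolution to use that these ranks are upper semicontinuous and, combined with the fixed total dimension vector $\widehat{\bf d}$ imposed by $M$, force specialisations of the lifted family to remain inside $\overline{G_{\widehat{\bf d}}\cdot\widehat{M}}$ and to cover all required degenerations $N$ in $\overline{G_{\bf d}\cdot M}$.
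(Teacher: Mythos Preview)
Your plan has the difficulty located in the wrong place, and the route you propose for part~(2) carries a genuine gap while missing a one-line argument.

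\textbf{Part~(2) is the easy half.} Once one knows that $\pi:R_{\widehat{\bf d}}(B_Q)\to R_{\bf d}(Q)$ is a categorical quotient by $G'_{\widehat{\bf d}}$ onto its image, general properties of affine quotients do all the work: the closed $G'_{\widehat{\bf d}}$-stable subset $\overline{\mathcal{O}_{\widehat{M}}}$ maps under $\pi$ to a closed, $G_{\bf d}$-stable subset of $R_{\bf d}(Q)$ containing $\mathcal{O}_M$ as a dense subset, hence onto $\overline{\mathcal{O}_M}$, and the restriction of a GIT quotient to a closed invariant subvariety is again a quotient. No lifting of degenerations is needed, and the obstacle you identify (that $N\mapsto\widehat{N}$ changes dimension vector) simply never arises. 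Your proposed lifting argument is not only unnecessary but also not carried out: semicontinuity of ranks does not by itself produce a family inside $\overline{\mathcal{O}_{\widehat{M}}}$ specialising to something over $N$.

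\textbf{Part~(1) is where the real content lies}, and your appeal to Hernandez--Leclerc is circular here. What \cite{HL} establishes is that each orbit closure in $R_{\bf d}(Q)$ is the affine quotient of \emph{some} representation variety of $\widetilde\Lambda_Q$; the point of the present theorem is precisely to identify \emph{which} dimension vector, namely $\widehat{\bf d}=\dimv\,\widehat{M}$. So one must show directly that every $F\in R_{\widehat{\bf d}}(B_Q)$ satisfies ${\rm res}\,F\in\overline{\mathcal{O}_M}$. The paper does this homologically: it produces canonical exact sequences relating any functor $F$ to $\widehat{{\rm res}\,F}$, from which one reads off $\dimv_{[U]}F\geq\dimv_{[U]}\widehat{{\rm res}\,F}$ for every non-projective indecomposable $U$; combined with the formula $\dimv_{[U]}\widehat{M}=\dim{\rm Hom}(Q_U,M)-\dim{\rm Hom}(U,M)$, this yields $\dim{\rm Hom}(U,M)\leq\dim{\rm Hom}(U,{\rm res}\,F)$ for all $U$, which is Bongartz's criterion for ${\rm res}\,F\in\overline{\mathcal{O}_M}$.

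Finally, the paper establishes that $\pi$ is a $G'_{\widehat{\bf d}}$-quotient directly (deframing plus Le~Bruyn--Procesi on trace invariants), without invoking Theorem~\ref{1} or \cite{HL}; the isomorphism $B_Q\cong\widetilde\Lambda_Q$ is proved afterwards and is logically independent.
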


The paper is organized as follows. In Section \ref{rem} we collect the definition and
main properties of the algebra $B_Q$. In Section \ref{QM} we describe the quotient maps 
relating the representation varieties of $\widehat{Q}$ to that of $Q$ and prove Theorem \ref{2}. 
In Section \ref{HL} the connection with the Hernandez-Leclerc construction is established and 
Theorem \ref{1} is proved.

\section{The algebra $B_Q$} \label{rem}
To a Dynkin quiver $Q$ we attach an associative algebra $B_Q$ (see \cite{CFR3}). The main 
ingredients of the construction are a quiver $\widehat{Q}$, an ideal $I\subset k\widehat{Q}$,
the algebra $B_Q=k\widehat{Q}/I$, and a functor $\Lambda:\bmod\, kQ\rightarrow \bmod\, B_Q$.
All these objects are introduced and described in \cite{CFR3}, except for the ideal $I$, which we
describe in Section \ref{HL}.  Let us recall the definitions.

For the path algebra $kQ$ of $Q$, we consider the category $\bmod\, kQ$ of finite dimensional left modules 
over $kQ$, and we consider the subcategory ${\rm proj}\, kQ$ of projective modules. We define 
${\rm Hom}({\rm proj}\, kQ)$ as the $k$-linear category whose objects are morphisms $P\rightarrow Q$ 
between objects of ${\rm proj}\, kQ$ and whose morphism spaces from $P\rightarrow Q$ to $R\rightarrow S$ 
are commutative squares
$$\begin{array}{ccc}P&\rightarrow&Q\\ \downarrow&&\downarrow\\ R&\rightarrow&S.\end{array}$$
Let $\mathcal{H}_Q$ be the full subcategory of ${\rm Hom}({\rm proj}\, kQ)$ with objects being injective 
morphisms $P\rightarrow Q$ such that the image is not contained in any proper direct summand of $Q$. We define 
the algebra $B_Q$ as ${\rm End}(A)^{\rm op}$ for a minimal additive generator $A$ in $\mathcal{H}_Q$. 
More precisely, the indecomposable objects in $\mathcal{H}_Q$ are of two types, namely:
\begin{itemize}
\item for every non-projective indecomposable object $U$ of $\bmod\, kQ$, an embedding $P_U\subset Q_U$ such 
that $0\rightarrow P_U\rightarrow Q_U\rightarrow U\rightarrow 0$ is a minimal projective resolution,
\item for every vertex $i$ of $Q$, the trivial embedding $P_i=P_i$,
\end{itemize}
where $P_i$ is the projective cover of the simple $kQ$-module $S_i$, attached to the vertex $i$.

Thus the algebra $B_Q$ is given as 
$$B_Q={\rm End}_{\mathcal{H}_Q}(\bigoplus_U(P_U\subset Q_U)\oplus\bigoplus_i(P_i=P_i))^{\rm op}.$$

Then the quiver $\widehat{Q}$ of $B_Q$ is given as follows:
\begin{itemize}
\item there are vertices
\begin{itemize}
\item $[U]$ for every non-projective indecomposable in $\bmod\, kQ$,
\item $[i]$ for every vertex of $Q$.
\end{itemize}
\item There are arrows
\begin{itemize}
\item $[U]\rightarrow[V]$ whenever there is an irreducible map $V\rightarrow U$ in $\bmod\, kQ$,
\item $[i]\rightarrow[S_i]$ for every vertex $i$ of $Q$ which is not a sink,
\item $[\tau^{-1} S_i]\rightarrow[i]$ for every vertex $i$ of $Q$ which is not a source.
\end{itemize}
\end{itemize}

Thus $B_Q=k\widehat{Q}/I$ for a certain admissible ideal $I$, and we think of left modules over $B_Q$ as 
certain representations of $\widehat{Q}$. Moreover, the category $\bmod\, B_Q$ of left modules over $B_Q$ 
is equivalent to the category $\bmod\,\mathcal{H}_Q^{\rm op}$ of contravariant $k$-linear functors from $\mathcal{H}_Q$ to the category $\bmod\, k$ of finite dimensional $k$-vector spaces.

We have a restriction functor ${\rm res}$ from $\bmod\, B_Q$ to $\bmod\, kQ$ given on the level of functors 
as follows: we restrict a functor $F:\mathcal{H}_Q^{\rm op}\rightarrow\bmod\, k$ to the subcategory of
$\mathcal{H}_Q$ of objects of the form $(P=P)$ yielding a functor 
$F:({\rm proj}\, kQ)^{\rm op}\rightarrow\bmod\, k$. The category of such functors is naturally 
equivalent to $\bmod\, kQ$.

We give a description of the functor ${\rm res}$ on the level of representations: given a representation 
$F$ of $\bmod\, B_Q$, it can be viewed as a special kind of representation of the quiver $\widehat{Q}$. We define 
the representation $M={\rm res}\, F$ of $Q$ as follows: first, we define $M_i=F_{[i]}$ for all vertices $i$ 
of $Q$. Let $\alpha:i\rightarrow j$ be an arrow of $Q$. Then ${\rm Ext}^1(S_i,S_j)$ is non-zero (and in 
fact one-dimensional), thus there exists a non-zero morphism $\tau^{-1}S_j\rightarrow S_i$ in $\bmod\, kQ$. 
Factoring it into irreducibles, there thus exists a path $\tau^{-1}S_j\rightarrow\ldots\rightarrow S_i$ 
in the Auslander-Reiten quiver of $kQ$. By the definition of $\widehat{Q}$, this yields a path
$$[i]\rightarrow[S_i]\rightarrow\ldots\rightarrow[\tau^{-1}S_j]\rightarrow[j]$$
in $\widehat{Q}$. We define $M_\alpha:M_i\rightarrow M_j$ as the composition of the maps in the representation 
$F$ of $\widehat{Q}$ corresponding to this path. Well-definedness, that is, independence of the path, 
of this definition follows from the fact that $F$ is a representation of $B_Q$, and that there is a 
one-dimensional space of morphisms from $(P_j=P_j)$ to $(P_i=P_i)$ in $\mathcal{H}_Q$.

We define a functor $\Lambda:\bmod\, kQ\rightarrow\bmod\, B_Q$ as follows: $\Lambda(M)=\widehat{M}$, 
viewed as a contravariant functor on $\mathcal{H}_Q$, is given by 
$$\widehat{M}(P\rightarrow Q)={\rm Im}({\rm Hom}(Q,M)\rightarrow{\rm Hom}(P,M)).$$

On the level of representations, we can view $\widehat{M}$ as a representation of $\widehat{Q}$ as follows:
$\widehat{M}_{[i]}=M_i$ and $\widehat{M}_{[U]}={\rm Im}({\rm Hom}(Q_U,M)\rightarrow{\rm Hom}(P_U,M))$ 
for a minimal projective resolution $0\rightarrow P_U\rightarrow Q_U\rightarrow U\rightarrow 0$ of $U$ in $\bmod\,Q$.

\section{Quotient map}\label{QM}
As before, let $Q$ be a Dynkin quiver with set of vertices $Q_0$ and arrows $\alpha:i\rightarrow j$, and let $k$ be an 
algebraically closed field of characteristic zero. For a dimension vector ${\bf d}\in{\bf N}Q_0$, 
we fix vector spaces $M_i$ of dimension $d_i$ for $i\in Q_0$ and define 
$R_{\bf d}(Q)=\bigoplus_{\alpha:i\rightarrow j}{\rm Hom}_{\bmod\,k}(M_i,M_j)$, 
on which the group $G_{\bf d}=\prod_{i\in Q_0}{\rm GL}(M_i)$ acts via base change 
$(g_i)_i(M_\alpha)_\alpha=(g_jM_\alpha g_i^{-1})_{\alpha:i\rightarrow j}$. 
The orbits $\mathcal{O}_M$ for  this action correspond bijectively to the isomorphism classes 
$[M]$ of representation of $kQ$  of dimension vector ${\bf d}$ by definition. We are interested in the 
geometry of the Zariski orbit closures $\overline{\mathcal{O}_M}$.

We write $\widehat{\bf d}$ for the dimension vector of $\widehat{M}=\Lambda(M)$ as a representation of 
$\widehat{Q}$. In particular, $\widehat{\bf d}$ has entries $\widehat{d}_U$ for every non-projective 
indecomposable $U$ of $kQ$ and entries $\widehat{d}_i$ (which coincide with the $d_i$ by definition of $\Lambda$).
We can consider the variety of representations $R_{\widehat{\bf d}}(\widehat{Q})$, which contains the closed subvariety $R_{\widehat{\bf d}}(B_Q)$ of representations of $B_Q$, that is, those which are annihilated by the ideal $I$.

On $R_{\widehat{\bf d}}(B_Q)$ we have an action of the structure group $G_{\widehat{\bf d}}$, which can be
canonically written as the product of a group $G_{\widehat{\bf d}}'$ and the group $G_{\bf d}$, where the 
first subgroup consists of the structure groups at the vertices $[U]$ of $\widehat{Q}$, and the second 
subgroup consists of the structure groups at the vertices $[i]$.

The restriction functor ${\rm res}:\bmod\, B_Q\rightarrow\bmod\, kQ$ of \cite{CFR3} induces a $G_{\bf d}$ -equivariant map $\pi$ from $R_{\widehat{\bf d}}(B_Q)$ to $R_{\bf d}(Q)$.

\begin{prop} The image of the induced map $\pi^*:k[R_{\bf d}(Q)]\rightarrow k[R_{\widehat{\bf d}}(B_Q)]$ between coordinate rings coincides with the ring $k[R_{\widehat{\bf d}}(B_Q)]^{G_{\widehat{\bf d}}'}$ of $G_{\widehat{\bf d}}'$-invariant functions.
\end{prop}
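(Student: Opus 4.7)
The plan is to prove the two inclusions of the claimed equality separately. The inclusion $\pi^{*}(k[R_{\bf d}(Q)]) \subseteq k[R_{\widehat{\bf d}}(B_Q)]^{G_{\widehat{\bf d}}'}$ is immediate: by the description of the restriction functor recalled in Section~\ref{rem}, the map $\pi$ depends only on the $[i]$-vertex data of a representation of $\widehat Q$, while $G_{\widehat{\bf d}}'$ acts only at the $[U]$-vertices and fixes the $[i]$-vertex spaces $M_i$. Hence $\pi$ is $G_{\widehat{\bf d}}'$-invariant and every pullback $\pi^{*}f$ lies in the invariant ring.

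For the reverse inclusion, I plan to invoke the fundamental theorem of invariant theory for quiver representations (Le Bruyn--Procesi) in its ``partially framed'' form. Passing to the ambient variety $R_{\widehat{\bf d}}(\widehat{Q}) \supseteq R_{\widehat{\bf d}}(B_Q)$, the group $G_{\widehat{\bf d}}' = \prod_U \mathrm{GL}(W_U)$ acts only on the $[U]$-vertex spaces. The resulting invariant ring $k[R_{\widehat{\bf d}}(\widehat{Q})]^{G_{\widehat{\bf d}}'}$ is then generated by two classes of functions: (a) matrix coefficients of the composed representation maps along oriented paths in $\widehat Q$ whose endpoints are both of type $[i]$; and (b) traces of oriented cycles supported entirely at $[U]$-vertices. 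Since $Q$ is Dynkin, the AR-quiver of $kQ$ is acyclic, hence so is the subquiver of $\widehat Q$ induced on the $[U]$-vertices, and (b) yields only constants. Cycles $[i] \to \ldots \to [i]$ fall under (a) and, by the identification in the next step, contribute only the identity. Restricting to the $G_{\widehat{\bf d}}'$-stable closed subvariety $R_{\widehat{\bf d}}(B_Q)$ preserves this generating set of type (a).

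It then remains to identify each invariant in (a) with an element of $\pi^{*}(k[R_{\bf d}(Q)])$. This is built into the construction of $\mathrm{res}$: a path $[i] \to [S_i] \to \ldots \to [\tau^{-1}S_j] \to [j]$ in $\widehat Q$ corresponds, modulo the relations $I$, to a morphism from $(P_j=P_j)$ to $(P_i=P_i)$ in $\mathcal{H}_Q$, that is, under the isomorphism $\mathrm{Hom}_{\mathcal{H}_Q}((P_j=P_j),(P_i=P_i)) \cong \mathrm{Hom}_{kQ}(P_j,P_i)$, to a $k$-linear combination of paths from $i$ to $j$ in $Q$. Therefore, for any $F \in R_{\widehat{\bf d}}(B_Q)$, the composition of the $F$-maps along such a path is a linear combination of the maps $M_p\colon M_i\to M_j$ obtained by composing structure maps of the $kQ$-module $M = \pi(F)$; its matrix coefficients are polynomials in the coordinates of $R_{\bf d}(Q)$ pulled back via $\pi^{*}$.

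The main obstacle I foresee is Step 2: extracting the correct framed version of Le Bruyn--Procesi for our partial structure group, and verifying that the acyclicity of the $[U]$-subquiver of $\widehat Q$ indeed rules out unexpected trace-of-cycle invariants. The subsequent identification step is then a direct translation of the description of the functor $\mathrm{res}$ given in Section~\ref{rem}.
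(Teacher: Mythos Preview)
Your approach is essentially the paper's: both arguments apply Le~Bruyn--Procesi, use acyclicity of the $[U]$-part of $\widehat{Q}$ to discard trace invariants, and then identify the remaining path invariants with the coordinates of $R_{\bf d}(Q)$ via the definition of $\mathrm{res}$. The only methodological difference is that the paper carries out the deframing trick explicitly (collapsing all $[i]$-vertices to a single vertex $\infty$ with multiplicities $d_i$, so that the standard Le~Bruyn--Procesi theorem applies verbatim to the auxiliary quiver $\tilde Q$), whereas you invoke a ``partially framed'' version of Le~Bruyn--Procesi as a black box. Since the deframing trick is precisely how one proves that framed version, this is a packaging difference, not a mathematical one.

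One genuine slip to fix: your justification of the easy inclusion is wrong as stated. The map $\pi$ does \emph{not} depend only on the $[i]$-vertex data; the arrow $M_\alpha$ of $\pi(F)$ is a composition of the $F$-maps along a path through $[U]$-vertices, so it certainly involves the $[U]$-data. The correct reason $\pi$ is $G_{\widehat{\bf d}}'$-invariant is that $G_{\widehat{\bf d}}'$ acts by base change only at the intermediate $[U]$-vertices of such a path, and these conjugations cancel telescopically in the composition since the endpoints $[i]$, $[j]$ are fixed. With this corrected, your argument goes through.
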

\begin{proof} 
The following method is the deframing procedure of \cite{CB}. We introduce an auxilliary quiver 
$\tilde{Q}$ associated to ${\bf d}$ as follows:
\begin{itemize}
\item $\tilde{Q}$ has vertices
\begin{itemize}
\item $[U]$ for the non-projective indecomposables $U$ of $\bmod\, kQ$,
\item and one additional vertex $\infty$.
\end{itemize}
\item We have the following arrows in $\tilde{Q}$:
\begin{itemize}
\item arrows $[U]\rightarrow [V]$ corresponding to the irreducible maps $V\rightarrow U$ in $\bmod\, kQ$,
\item $d_i$ arrows from $\infty$ to $[S_i]$ for every vertex $i$ of $Q$ which is not a sink,
\item $d_i$ arrows from $[\tau^{-1}S_i]$ to $\infty$ for every vertex $i$ of $Q$ which is not a source.
\end{itemize}
\end{itemize}
We define a dimension vector $\tilde{\bf d}$ for $\tilde{Q}$ by $\tilde{\bf d}_{[U]}=\widehat{d}_{[U]}$ and $\tilde{\bf d}_\infty=1$.

A choice of bases of the spaces $M_i$ yields an isomorphism between $R_{\widehat{\bf d}}(\widehat{Q})$ and
$R_{\tilde{\bf d}}(\tilde{Q})$. Noting that the structure group for the latter variety of representations 
is isomorphic to $G_{\widehat{\bf d}}'\times{\bf G}_m$, this isomorphism is also 
$G_{\widehat{\bf d}}'$-equivariant. In particular, we have
$$k[R_{\widehat{\bf d}}(\widehat{Q})]^{G_{\widehat{\bf d}}'}\simeq 
k[R_{\tilde{\bf d}}(\tilde{Q})]^{G_{\widehat{\bf d}}'\times{\bf G}_m}=
k[R_{\tilde{\bf d}}(\tilde{Q})]^{G_{\widehat{\bf d}}'},$$
since the dilation action at the one-dimension space can be neglected. 
We now apply the main result of \cite{LBP} that the latter invariant ring is generated by taking traces 
along oriented cycles. By cyclic invariance of the trace, noting that the subquiver supported outside the 
vertex $\infty$ has no oriented cycles, we only need to consider traces along cycles starting and ending in 
the vertex $\infty$. Translating these invariant functions back to $R_{\widehat{\bf d}}(\widehat{Q})$, we see 
that the invariant ring $k[R_{\widehat{\bf d}}(\widehat{Q})]^{G_{\widehat{\bf d}}'}$ is generated by the 
matrix entries representing paths in $\widehat{Q}$ starting and ending in some of the vertices $[i]$. Without 
loss of generality, we can restrict to the matrix entries representing paths in $\widehat{Q}$ starting in $[i]$ 
and ending in $[j]$ whenever we have an arrow $i\rightarrow j$ in $Q$. But the map $\pi$ is induced from the 
functor ${\rm res}$ which precisely defines the matrices representing the arrows $i\rightarrow j$ by these 
paths in $\widehat{Q}$. The proposition is proved.
\end{proof}

\begin{rem} The above argument replaces reference to the theory of Nakajima quiver varieties in \cite[Proof of Proposition 9.4]{HL}.
\end{rem}



\begin{thm} 
Via the map $\pi$, the variety $\overline{\mathcal{O}_M}$ is the quotient of 
$\overline{\mathcal{O}_{\widehat{M}}}$ by $G_{\widehat{d}}'$.
\end{thm}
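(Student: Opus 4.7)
The plan is to combine the preceding proposition with the universal property of affine GIT quotients by the reductive group $G_{\widehat{\bf d}}'$. First, I would verify that $\pi(\widehat{M})=M$: this is immediate from the definitions of $\Lambda$ and ${\rm res}$ recalled in Section \ref{rem}, since $\widehat{M}_{[i]}=M_i$ for every vertex $i$ of $Q$, and restricting the functor $\widehat{M}(P\to Q)={\rm Im}({\rm Hom}(Q,M)\to{\rm Hom}(P,M))$ to identity objects $(P=P)$ yields ${\rm Hom}(P,M)$, i.e., the functor on ${\rm proj}\,kQ$ corresponding to $M$. Since $\pi$ is $G_{\bf d}$-equivariant and $G_{\widehat{\bf d}}'$-invariant (the latter because $G_{\widehat{\bf d}}'$ acts trivially on the target) and $G_{\widehat{\bf d}}=G_{\widehat{\bf d}}'\times G_{\bf d}$, one obtains $\pi(\mathcal{O}_{\widehat{M}})=G_{\bf d}\cdot\pi(\widehat M)=\mathcal{O}_M$. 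By continuity, $\pi(\overline{\mathcal{O}_{\widehat{M}}})\subset\overline{\mathcal{O}_M}$, so $\pi$ restricts to a morphism $\bar\pi:\overline{\mathcal{O}_{\widehat{M}}}\to\overline{\mathcal{O}_M}$, which is dominant since its image contains $\mathcal{O}_M$.

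The remaining task is to promote the invariant-theoretic identity of the preceding proposition from the ambient variety to the orbit closure. Since $\overline{\mathcal{O}_{\widehat{M}}}\hookrightarrow R_{\widehat{\bf d}}(B_Q)$ is a $G_{\widehat{\bf d}}'$-stable closed subvariety and $G_{\widehat{\bf d}}'$ is reductive (a product of general linear groups), the Reynolds operator argument gives a surjection
$$k[R_{\widehat{\bf d}}(B_Q)]^{G_{\widehat{\bf d}}'}\twoheadrightarrow k[\overline{\mathcal{O}_{\widehat{M}}}]^{G_{\widehat{\bf d}}'}.$$
Combined with the preceding proposition this shows that the image of $\pi^*:k[R_{\bf d}(Q)]\to k[\overline{\mathcal{O}_{\widehat{M}}}]$ equals $k[\overline{\mathcal{O}_{\widehat{M}}}]^{G_{\widehat{\bf d}}'}$. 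Since $\pi$ sends $\overline{\mathcal{O}_{\widehat{M}}}$ into $\overline{\mathcal{O}_M}$, that map factors as $k[R_{\bf d}(Q)]\twoheadrightarrow k[\overline{\mathcal{O}_M}]\xrightarrow{\bar\pi^*}k[\overline{\mathcal{O}_{\widehat{M}}}]$, with the first arrow surjective (restriction to a closed subvariety) and the second injective (dominance of $\bar\pi$). Hence $\bar\pi^*$ identifies $k[\overline{\mathcal{O}_M}]$ with $k[\overline{\mathcal{O}_{\widehat{M}}}]^{G_{\widehat{\bf d}}'}$, which is exactly the assertion that $\bar\pi$ realizes $\overline{\mathcal{O}_M}$ as the affine quotient $\overline{\mathcal{O}_{\widehat{M}}}/\!/G_{\widehat{\bf d}}'$.

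The main obstacle is precisely the transfer of the invariant ring computation from $R_{\widehat{\bf d}}(B_Q)$ down to the closed $G_{\widehat{\bf d}}'$-stable subvariety $\overline{\mathcal{O}_{\widehat{M}}}$; everything else is formal. Once this surjectivity is in place, set-theoretic surjectivity of $\bar\pi$ (i.e. the actual equality $\pi(\overline{\mathcal{O}_{\widehat M}})=\overline{\mathcal{O}_M}$) and the universal property of $\bar\pi$ as the categorical quotient follow from the standard theory of affine quotients by reductive groups.
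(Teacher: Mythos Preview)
Your proposal is correct and follows essentially the same route as the paper: both deduce the result from the preceding proposition together with the standard fact that an affine quotient by a reductive group restricts to a quotient on any closed invariant subvariety. The paper simply cites ``properties of quotients'' for this restriction step (and obtains the identification of the image with $\overline{\mathcal{O}_M}$ by closedness of images under quotient maps), whereas you spell out the same step via the Reynolds operator and the factorization through $k[\overline{\mathcal{O}_M}]$; the content is the same.
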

\begin{proof} 
By the previous proposition, the map 
$\pi:R_{\widehat{\bf d}}(B_Q)\rightarrow\pi(R_{\widehat{\bf d}}(B_Q))\subset R_{\bf d}(Q)$ is a 
quotient by $G_{\widehat{\bf d}}'$. The subset $\overline{\mathcal{O}_{\widehat{M}}}$ is closed and 
$G_{\widehat{\bf d}}'$-invariant, thus the restriction 
$\pi:\overline{\mathcal{O}_{\widehat{M}}}\rightarrow
\pi(\overline{\mathcal{O}_{\widehat{M}}})$ is a quotient. The latter image is closed by properties of quotients, it is $G_{\bf d}$-stable by $G_{\bf d}$-equivariance of $\pi$, and it contains $\pi(\mathcal{O}_{\widehat{M}})=\mathcal{O}_M$ as a dense subset. Thus it coincides with $\overline{\mathcal{O}_M}$. Again by properties of quotients, 
the restriction is a quotient itself.
\end{proof}

We will show that, in fact, the whole representation variety $R_{\widehat{\bf d}}(B_Q)$ is mapped to the closure of $\mathcal{O}_M$ under the map $\pi$. To do this, we have to know the precise relation between the functors $F$ and $\widehat{{\rm res}\, F}$, which we describe using the methods of \cite{CFR3}:

\begin{thm} For every functor $F$ in $\bmod\,\mathcal{H}_Q^{\rm op}$, there exist canonical exact sequences
$$0\rightarrow F_1\rightarrow F\rightarrow{\rm Ext}_{B_Q}^1(\widehat{{\rm Coker}\,\_},F)\rightarrow 0,$$
$$0\rightarrow{\rm Ext}_{B_Q}^1(F_1,\widehat{\tau_Q{\rm Coker}\,\_})^*\rightarrow F_1\rightarrow\widehat{{\rm res}\, F}\rightarrow 0,$$
and dually
$$0\rightarrow{\rm Ext}_{B_Q}^1(F,\widehat{\tau_Q{\rm Coker}\,\_})^*\rightarrow F\rightarrow F_2\rightarrow 0,$$
$$0\rightarrow \widehat{{\rm res}\, F}\rightarrow F_2\rightarrow{\rm Ext}_{B_Q}^1(\widehat{{\rm Coker}\,\_},F_2)\rightarrow 0.$$
\end{thm}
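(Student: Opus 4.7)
The plan is to derive all four sequences from a single projective presentation of the $B_Q$-module $\widehat{\mathrm{Coker}(X)}$ attached to each object $X = (P_U \to Q_U) \in \mathcal{H}_Q$, together with its dual injective copresentation. Under the equivalence $\bmod\,B_Q \simeq \bmod\,\mathcal{H}_Q^{\mathrm{op}}$ the indecomposable projective $B_Q$-modules are the Yoneda representables $h_X(\_)=\Hom_{\mathcal{H}_Q}(\_,X)$, and a direct computation with the morphism squares shows that there is a canonical short exact sequence of $B_Q$-modules
$$0 \to h_{(P_U \to Q_U)} \to h_{(Q_U = Q_U)} \to \widehat{U} \to 0,$$
obtained by noting that a morphism $Y \to (Q_U=Q_U)$ factors through $(P_U \to Q_U)$ exactly when its composition with $Q_U\to U = \mathrm{Coker}(P_U\to Q_U)$ kills the ``sub'' part of $Y$. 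Applying $\Hom_{B_Q}(-, F)$ yields the 4-term exact sequence
$$0 \to \Hom_{B_Q}(\widehat{U}, F) \to F(Q_U{=}Q_U) \to F(P_U \to Q_U) \to \mathrm{Ext}^1_{B_Q}(\widehat{U}, F) \to 0,$$
which is the engine driving the whole theorem.

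The first sequence now follows by setting $F_1(X) = \mathrm{Im}(F(Q_U{=}Q_U) \to F(X))$ for $X=(P_U\to Q_U)$ and $F_1(P{=}P)=F(P{=}P)$: naturality of the presentation in $X$ makes $F_1$ a subfunctor of $F$, and the 4-term sequence identifies $F/F_1$ at $X$ with $\mathrm{Ext}^1_{B_Q}(\widehat{\mathrm{Coker}(X)}, F)$. For the second sequence, the morphism $(P_U{=}P_U) \to (P_U \to Q_U)$ in $\mathcal{H}_Q$ induces $F(P_U \to Q_U) \to F(P_U{=}P_U)$, and when restricted to $F_1$ its image coincides with $\mathrm{Im}(F(Q_U{=}Q_U)\to F(P_U{=}P_U))= \widehat{\mathrm{res}\,F}(X)$ by definition of $\Lambda$; this produces the surjection $F_1 \twoheadrightarrow \widehat{\mathrm{res}\, F}$. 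To identify its kernel as $\mathrm{Ext}^1_{B_Q}(F_1, \widehat{\tau_Q \mathrm{Coker}\_})^*$, I would interpret objects of $\mathcal{H}_Q$ as two-term projective resolutions of $kQ$-modules and their morphism spaces as chain maps modulo null-homotopies; this identifies the difference module $K = h_{(P_U \to Q_U)}/h_{(P_U{=}P_U)}$ with a functor encoding $\Hom_{kQ}(V, U)$ at each vertex $[V]$, and the Auslander-Reiten formula $D\overline{\Hom}_{kQ}(V, U) \cong \mathrm{Ext}^1_{kQ}(U, \tau_Q V)$, combined with the transfer $\mathrm{Ext}^1_{B_Q}(\widehat{V}, \widehat{W}) \cong \mathrm{Ext}^1_{kQ}(V, W)$ (which itself follows from the projective presentation above), then rewrites the kernel in the stated Ext-dual form.

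The third and fourth sequences come from the dual picture. The $k$-linear dual of the projective presentation of $\widehat{U}$ is an injective copresentation of $\widehat{\tau_Q U}$ in $\bmod\, B_Q$, so the same 4-term machinery applied to $\Hom_{B_Q}(F,-)$ produces a quotient $F\twoheadrightarrow F_2$ with kernel $\mathrm{Ext}^1_{B_Q}(F,\widehat{\tau_Q\mathrm{Coker}\_})^{*}$, giving the third sequence. The fourth sequence is then the first sequence applied to $F_2$ in place of $F$, together with the observation $\widehat{\mathrm{res}\, F_2} = \widehat{\mathrm{res}\, F}$ (the duality does not touch the restriction to projective objects $(P{=}P)$ since those are the fixed points of the Auslander-Reiten translate at the boundary).

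I expect the main obstacle to be the identification of the kernel of $F_1 \twoheadrightarrow \widehat{\mathrm{res}\, F}$ with $\mathrm{Ext}^1_{B_Q}(F_1, \widehat{\tau_Q \mathrm{Coker}\_})^*$. The direct computation only yields $\Hom_{B_Q}(K, F) \cap F_1$ for the difference module $K$ above, and the passage to the Ext-dual requires transferring Auslander-Reiten duality from $\bmod\, kQ$ to $\bmod\, B_Q$ along the functor $\Lambda$. This is precisely the kind of technical transfer built up in \cite{CFR3}, which the theorem statement explicitly invokes.
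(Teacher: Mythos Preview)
Your engine---the projective resolution $0\to h_{(P\subset Q)}\to h_{(Q=Q)}\to\widehat{U}\to 0$ and the resulting four-term sequence after applying $\Hom(-,F)$---is exactly what the paper uses for the first sequence, and your derivation of the fourth sequence as the first applied to $F_2$ is likewise the paper's. Two points deserve correction, however.

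\emph{Ordering.} You attack the second sequence head-on and flag the kernel identification as the main obstacle. The paper avoids this entirely by reversing the order: it proves the \emph{third} sequence for arbitrary $F$, then observes that $(F_1)_2=(F_2)_1=\widehat{\mathrm{res}\,F}$, so the second sequence is simply the third sequence with $F$ replaced by $F_1$. This dissolves your obstacle, since you never need to intersect anything with the subfunctor $F_1$.

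\emph{The injective copresentation.} Your claim that ``the $k$-linear dual of the projective presentation of $\widehat{U}$ is an injective copresentation of $\widehat{\tau_Q U}$'' is not right as stated: dualising or applying the Nakayama functor to $h_{(P\subset Q)}\to h_{(Q=Q)}$ produces injectives indexed by $(P\subset Q)$ and $(Q=Q)$, whereas the map governing the third sequence is $F(P\subset Q)\to F(P=P)$, coming from the morphism $(P=P)\to(P\subset Q)$. The paper builds the correct copresentation differently: it starts from the injective coresolution $0\to\tau U\to\nu P\to\nu Q$ in $\bmod\,kQ$, lifts this via \cite{CFR3} to an injective coresolution $0\to\widehat{\tau U}\to\Hom((P=P),\_)^*\to\Hom((P\subset Q),\_)^*$ in $\bmod\,B_Q$, applies the inverse Nakayama functor $\nu_{B_Q}^{-}$ to obtain $h_{(P=P)}\to h_{(P\subset Q)}\to\tau_{B_Q}^{-1}\widehat{\tau U}\to 0$, and then invokes the Auslander--Reiten formula in $\bmod\,B_Q$ (valid because $\widehat{\tau U}$ has injective dimension at most one) to identify $\Hom(\tau_{B_Q}^{-1}\widehat{\tau U},F)\simeq\mathrm{Ext}^1(F,\widehat{\tau U})^*$. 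This replaces your proposed transfer of Auslander--Reiten duality along $\Lambda$, which would require knowing $\Lambda$ intertwines the translates---something not established and not needed.
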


\begin{proof} For every functor $F$, we define functors $F_1$, $F_2$ and $F_3$ as follows: given an object $P\subset Q$ of $\mathcal{H}_Q$, we have a canonical sequence of maps $$\xymatrix@1@C=15pt{(P=P)\ar^{f}[r]&(P\subset Q)\ar^g[r]&(Q=Q)}$$ in $\mathcal{H}_Q$ by \cite[Proof of Lemma 5.3]{CFR3}. We define
$$
\begin{array}{lcl}
F_1(P\subset Q)&=&{\rm Im}(\xymatrix@1@C=25pt{F(Q=Q)\ar^{F(g)}[r]& F(P\subset Q)}),\\
F_2(P\subset Q)&=&{\rm Im}(\xymatrix@1@C=25pt{F(P\subset Q)\ar^{F(f)}[r]&F(P=P)}),\\
F_3(P\subset Q)&=&{\rm Im}(\xymatrix@1@C=25pt{F(Q=Q)\ar^{F(g\circ f)}[r]&F(P=P)}),\\
\end{array}
$$
with the natural definition on morphisms. In particular $F_i(P=P)=F(P=P)$, for every $i=1,2,3$. 
We thus have
$$F_3=(F_1)_2=(F_2)_1=\widehat{{\rm res}\, F}$$
by definition. Thus the existence of the second and fourth claimed exact sequence follows from existence of the first and third. Existence of the first sequence is equivalent to exactness of
$$0\rightarrow {\rm Im}(F(Q=Q)\rightarrow F(P\subset Q))\rightarrow F(P\subset Q)\rightarrow {\rm Ext}^1(\widehat{Q/P},F)\rightarrow 0$$
for all objects $P\subset Q$ of $\mathcal{H}_Q$. By \cite[Proof of Theorem 5.6]{CFR3}, we have a projective resolution of functors
$$0\rightarrow{\rm Hom}(\_,(P\subset Q))\rightarrow{\rm Hom}(\_,(Q=Q))\rightarrow\widehat{Q/P}\rightarrow 0.$$
Applying ${\rm Hom}(\_,F)$ to this sequence and using Yoneda's Lemma, this yields a right exact sequence
$$F(Q=Q)\rightarrow F(P\subset Q)\rightarrow{\rm Ext}^1(\widehat{Q/P},F)\rightarrow 0.$$
This proves the above exactness claim.

To prove the existence of the third exact sequence, we use methods of Auslander-Reiten theory, to which we refer to \cite{ASS}. We denote by $\tau=\tau_Q$ the Auslander-Reiten translate in the category $\bmod\, kQ$, and by $\tau_{B_Q}$ the Auslander-Reiten translate in the category $\bmod\, B_Q$.

As above, we have to prove exactness of
$$0\rightarrow{\rm Ext}^1(F,\widehat{\tau(Q/P)})^*\rightarrow F(P\subset Q)\rightarrow{\rm Im}(F(P\subset Q)\rightarrow F(P=P))\rightarrow 0.$$
Since the image of $\Lambda$ consists of objects of injective dimension at most one \cite[Theorem 5.6]{CFR3}, we can apply the Auslander-Reiten formula \cite[Corollary 2.15]{ASS} and identify $${\rm Ext}^1(F,\widehat{\tau(Q/P)})^*\simeq {\rm Hom}(\tau_{B_Q}^{-1}\widehat{\tau(Q/P)},F).$$
We use the (inverse) Nakayama functors $\nu^{(-)}$ (resp. $\nu^{(-)}_{B_Q}$) of the category $\bmod\, kQ$ (resp. $\bmod\, B_Q$), see \cite[IV.2]{ASS}. The functor $\nu_{B_Q}^-$ is given, by definition, by $\nu^-_{B_Q}({\rm Hom}((P\subset Q),\_)^*)={\rm Hom}(\_,(P\subset Q))$. Starting from an object $P\subset Q$ of $\mathcal{H}_Q$, we consider the exact sequence $0\rightarrow P\rightarrow Q\rightarrow Q/P\rightarrow 0$, which induces an exact sequence $$0\rightarrow\tau(Q/P)\rightarrow \nu P\rightarrow \nu Q\rightarrow\nu (Q/P)=0$$
by \cite[Proposition 2.4.(a)]{ASS}; the last equality follows since $Q/P$ has no projective direct summand by assumption. By \cite[Proof of Theorem 5.6]{CFR3}, this injective coresolution of $\tau (Q/P)$ induces an injective coresolution
$$0\rightarrow \widehat{\tau(Q/P)}\rightarrow{\rm Hom}((P=P),\_)^*\rightarrow{\rm Hom}((P\subset Q),\_)^*\rightarrow 0$$
of $\widehat{\tau(Q/P)}$. This in turn, by \cite[Proposition 2.4.(b)]{ASS}, yields a right exact sequence
$$\underbrace{\nu_{B_Q}^-{\rm Hom}((P=P),\_)^*}_{\simeq{\rm Hom}(\_,(P=P))}\rightarrow\underbrace{\nu_{B_Q}^-{\rm Hom}((P\subset Q),\_)^*}_{\simeq{\rm Hom}(\_,(P\subset Q))}\rightarrow\tau_{B_Q}^{-1}\widehat{\tau Q/P}\rightarrow 0.$$
Applying ${\rm Hom}(\_,F)$ to this sequence and using Yoneda's Lemma, this yields a sequence
$$0\rightarrow{\rm Hom}(\tau_{B_Q}^{-1}\widehat{Q/P},F)\rightarrow F(P\subset Q)\rightarrow F(P=P).$$
The above claimed exactness follows. The theorem is proved.\end{proof}

\begin{cor}\label{cor1f} The following conditions are equivalent for a functor $F$ in $\bmod\,\mathcal{H}_Q^{\rm op}$ and a representation $N$ of $\bmod\, kQ$:
\begin{enumerate}
\item ${\rm res}\, F\simeq N$,
\item There exist exact sequences
$$0\rightarrow G\rightarrow F\rightarrow F'\rightarrow 0\mbox{ and }0\rightarrow F''\rightarrow G\rightarrow \widehat{N}\rightarrow 0$$
such that ${\rm res}\,F'=0={\rm res}\, F''$.
\end{enumerate}
Moreover, a functor $F$ belongs to the essential image of $\Lambda$ if and only if ${\rm Ext}^1(F,\widehat{V})=0={\rm Ext}^1(\widehat{V},F)$ for all $V$.
\end{cor}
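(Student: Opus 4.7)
The plan is to deduce both parts of the corollary directly from the four canonical exact sequences established in the preceding theorem, combined with two elementary observations: the restriction functor ${\rm res}$ is exact (it is evaluation at the full subcategory of trivial embeddings $(P=P)$), and it satisfies ${\rm res}\,\widehat{N}=N$ by construction of $\Lambda$.

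For part (1), the implication (i)$\Rightarrow$(ii) will be obtained by taking $G=F_1$, $F'={\rm Ext}_{B_Q}^1(\widehat{{\rm Coker}\,\_},F)$, and $F''={\rm Ext}_{B_Q}^1(F_1,\widehat{\tau_Q{\rm Coker}\,\_})^*$, and invoking the first two sequences of the theorem. The required vanishings ${\rm res}\,F'=0={\rm res}\,F''$ come for free, since evaluating the Ext functors at a trivial object $(P=P)\in\mathcal{H}_Q$ involves $\widehat{P/P}=\widehat{0}=0$. For the converse (ii)$\Rightarrow$(i), I simply apply the exact functor ${\rm res}$ to both short exact sequences. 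The two vanishings collapse them to ${\rm res}\,F\simeq{\rm res}\,G\simeq{\rm res}\,\widehat{N}=N$.

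The backward direction of the "moreover" statement is equally short. If ${\rm Ext}^1(\widehat{V},F)=0={\rm Ext}^1(F,\widehat{V})$ for all $V$, then the first canonical sequence forces $F_1=F$ (its cokernel ${\rm Ext}^1(\widehat{{\rm Coker}\,\_},F)$ vanishes pointwise), and the second sequence then gives $F\simeq\widehat{{\rm res}\,F}$, placing $F$ in the essential image of $\Lambda$.

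The forward direction of the "moreover" statement is where I expect the only real work, and it is the main obstacle. For $F=\widehat{N}$, one computes directly from the definitions that $(\widehat{N})_1=\widehat{N}=(\widehat{N})_2$: indeed the map $\widehat{N}(Q=Q)={\rm Hom}(Q,N)\to\widehat{N}(P\subset Q)={\rm Im}({\rm Hom}(Q,N)\to{\rm Hom}(P,N))$ is surjective, and the map $\widehat{N}(P\subset Q)\to\widehat{N}(P=P)={\rm Hom}(P,N)$ is injective. Hence the first and third canonical sequences collapse, forcing ${\rm Ext}^1(\widehat{V},\widehat{N})=0$ for $V$ running over the cokernels ${\rm Coker}(P\hookrightarrow Q)$ appearing in $\mathcal{H}_Q$ (i.e.\ the non-projective indecomposables $U$), and ${\rm Ext}^1(\widehat{N},\widehat{V})=0$ for $V$ running over the $\tau_Q U$ (i.e.\ the non-injective indecomposables). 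The remaining indecomposables to be covered are projective or injective $kQ$-modules; for these, the corresponding $\widehat{V}$ are projective or injective in $\bmod\,B_Q$ by \cite[Theorem 5.6]{CFR3}, so the required Ext vanishings are automatic. Bookkeeping the cases to cover every $V$ by additivity of Ext is the only subtle point.
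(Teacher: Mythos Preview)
Your proof is correct and follows the same outline as the paper's two-line argument (the previous theorem together with exactness of ${\rm res}$ for the equivalence, and the previous theorem together with \cite[Theorem~5.6]{CFR3} for the ``moreover'' statement); you have simply unpacked the details. One remark on the forward direction of the ``moreover'': the paper likely intends to quote directly from \cite{CFR3} the vanishing ${\rm Ext}^1(\widehat{M},\widehat{N})=0$ for all $M,N$, whereas you rederive it from the canonical sequences plus the observation that $\widehat{P_i}\simeq{\rm Hom}(\_,(P_i=P_i))$ is projective and $\widehat{I_i}\simeq{\rm Hom}((P_i=P_i),\_)^*$ is injective in $\bmod\,B_Q$ --- this is true (for hereditary $kQ$ one has ${\rm Hom}(U,P_i)=0$ for $U$ indecomposable non-projective, and dually), though it may not be the literal content of Theorem~5.6 in \cite{CFR3}.
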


\begin{proof} The first claim follows from the previous theorem and exactness of the functor ${\rm res}$. The second claim follows from the previous theorem and \cite[Theorem 5.6]{CFR3}.
\end{proof}

\begin{rem} The second statement of the corollary gives an alternative description of the essential image of the functor $\Lambda$ to \cite[Proposition 6.11]{CFR3}.
\end{rem}

These homological properties allow us to derive the following information on the quotient map $\pi$ and the structure of its fibres:



\begin{cor} The quotient map $\pi$ maps the whole variety $R_{{\widehat{\bf d}}}(B_Q)$ onto $\overline{\mathcal{O}_M}$.
\end{cor}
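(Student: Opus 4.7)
The plan is to establish the missing inclusion $\pi(R_{\widehat{\bf d}}(B_Q))\subseteq \overline{\mathcal{O}_M}$; the reverse containment is immediate from the preceding theorem, since $\widehat{M}\in R_{\widehat{\bf d}}(B_Q)$ with $\pi(\widehat{M})=M$. Fix $F\in R_{\widehat{\bf d}}(B_Q)$ and set $N={\rm res}\, F$. Since the functor ${\rm res}$ acts as the identity on the components indexed by the framing vertices $[i]$, the module $N$ automatically has dimension vector ${\bf d}$. The aim is to extract from Corollary \ref{cor1f} a homological inequality that places $N$ in the Bongartz order below $M$, and then to invoke the classical characterization of orbit closures for Dynkin quivers.

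Applying Corollary \ref{cor1f} to the pair $(F,N)$ produces exact sequences
$$0\to G\to F\to F'\to 0,\qquad 0\to F''\to G\to \widehat{N}\to 0$$
with ${\rm res}\, F'=0={\rm res}\, F''$. Summing dimension vectors yields $\widehat{\bf d}=\dim F=\dim\widehat{N}+\dim F'+\dim F''$, so componentwise $\dim\widehat{N}_{[U]}\le \widehat{d}_U$ at every vertex $[U]$ of $\widehat{Q}$.

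The next step is to convert this estimate into a statement about $M$ and $N$ directly. For any $L\in\bmod\, kQ$ and any non-projective indecomposable $U$, applying ${\rm Hom}(-,L)$ to the minimal projective resolution $0\to P_U\to Q_U\to U\to 0$ and using the defining formula $\widehat{L}(P_U\subset Q_U)={\rm Im}({\rm Hom}(Q_U,L)\to {\rm Hom}(P_U,L))$ gives
$$\dim\widehat{L}_{[U]}=\dim{\rm Hom}(P_U,L)-\dim{\rm Ext}^1(U,L).$$
For $L\in\{M,N\}$, both of dimension vector ${\bf d}$, the first summand depends only on ${\bf d}$ and is therefore the same, so the inequality $\dim\widehat{N}_{[U]}\le \dim\widehat{M}_{[U]}$ is equivalent to
$$\dim{\rm Ext}^1(U,N)\ge \dim{\rm Ext}^1(U,M)$$
for every non-projective indecomposable $U$ (the inequality is a trivial equality for projective $U$).

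To finish, I invoke the classical Bongartz criterion for Dynkin quivers: $N\in\overline{\mathcal{O}_M}$ if and only if $\dim{\rm Ext}^1(U,N)\ge \dim{\rm Ext}^1(U,M)$ for every indecomposable $U$ (equivalently, via the Euler form, $\dim{\rm Hom}(U,N)\ge \dim{\rm Hom}(U,M)$). This yields $\pi(F)=N\in\overline{\mathcal{O}_M}$, as desired. I expect the only delicate step to be the translation between the dimension vector of $\widehat{N}$ and the Ext numbers above; once the Bongartz criterion is accepted as a black box, the remainder of the argument is pure bookkeeping with the sequences from Corollary \ref{cor1f}.
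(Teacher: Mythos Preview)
Your argument is correct and follows the same route as the paper: both extract the inequality $\dimv_{[U]}\widehat{N}\leq\dimv_{[U]}\widehat{M}$ from the exact sequences of the preceding theorem (packaged for you via Corollary~\ref{cor1f}) and then translate it through the minimal projective resolution of $U$ into the Bongartz criterion. The only cosmetic difference is that the paper rewrites $\dim\widehat{L}_{[U]}$ as $\dim{\rm Hom}(Q_U,L)-\dim{\rm Hom}(U,L)$ and works with ${\rm Hom}$, whereas you use $\dim{\rm Hom}(P_U,L)-\dim{\rm Ext}^1(U,L)$ and work with ${\rm Ext}^1$; the two are equivalent via the Euler form.
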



\begin{proof} Given a functor $F$ of the same dimension vector as $\widehat{M}$ with restriction $N={\rm res}\, F$, we thus have to prove that $\mathcal{O}_N$ belongs to the closure of $\mathcal{O}_M$. By \cite{Bo}, the latter is equicalent to $\dim{\rm Hom}(U,M)\leq\dim {\rm Hom}(U,N)$ holding for all non-projective indecomposables $U$. For each such $U$, use a minimal projective resolution $\xymatrix@1@C=10pt{0\ar[r]& P\ar[r]&Q\ar[r]& U\ar[r]& 0}$ as before and calculate using the definition of the functor $\Lambda$:
$$\dimv_{[U]}F=\dimv_{[U]}\widehat{M}=\dim{\rm Im}({\rm Hom}(Q,M)\rightarrow{\rm Hom}(P,M))=$$
$$=\dim{\rm Hom}(Q,M)-\dim{\rm Hom}(U,M).$$
By the previous theorem, we have $\dimv_{[U]}F\geq\dimv_{[U]}\widehat{N}$, and thus
$$\dim{\rm Hom}(U,M)=\dim{\rm Hom}(Q,M)-\dimv_{[U]}F\leq$$
$$\leq\dim{\rm Hom}(Q,N)-\dimv_{[U]}\widehat{N}=\dim{\rm Hom}(U,N),$$
proving the claim. 
\end{proof}

\begin{rem}
If $\widehat{M}$ and $\widehat{N}$ have the same dimension vector then $M\simeq N$. Indeed, by the lemma above we have $N\in\overline{\mathcal{O}_M}$
and $M\in\overline{\mathcal{O}_N}$.
\end{rem}

One can ask whether this description of the orbit closure $\overline{\mathcal{O}_M}$ as an affine quotient has applications to the study of its geometric properties, in the spirit of the proof \cite{ADK} -- {\it in the case of $Q$ being an equioriented type $A$ quiver} -- of such orbit closures being normal Cohen-Macaulay varieties with at most rational singularities. To this effect, we consider the stratification of the representation variety $R_{\widehat{\bf d}}(B_Q)$ by the inverse images $\pi^{-1}(\mathcal{O}_N)$ of the orbits in $R_{\bf d}(Q)$ under the quotient map $\pi$ and formulate the following:

\begin{conj} For every $N$, we have $\dim\pi^{-1}(\mathcal{O}_N)\leq\dim\overline{\mathcal{O}_{\widehat{M}}}$, with equality holding only for $N=M$.
\end{conj}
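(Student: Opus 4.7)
The plan is to reduce the conjecture to a bound on the fiber $\pi^{-1}(N)$ over a single point and then apply the canonical exact sequences from the preceding theorem. Since $\pi$ is $G_{\bf d}$-equivariant and $\mathcal{O}_N$ is a single $G_{\bf d}$-orbit, all fibres of $\pi^{-1}(\mathcal{O}_N)\to\mathcal{O}_N$ have the same dimension, so
\begin{equation*}
\dim\pi^{-1}(\mathcal{O}_N)\;=\;\dim\mathcal{O}_N+\dim\pi^{-1}(N).
\end{equation*}
Combining this with the orbit-stabilizer formulas $\dim\mathcal{O}_{\widehat{M}}=\dim G_{\widehat{\bf d}}-\dim{\rm End}_{B_Q}(\widehat{M})$ and $\dim\mathcal{O}_N=\dim G_{\bf d}-\dim{\rm End}_{kQ}(N)$ and the splitting $\dim G_{\widehat{\bf d}}=\dim G_{\widehat{\bf d}}'+\dim G_{\bf d}$, the conjecture becomes equivalent to
\begin{equation*}
\dim\pi^{-1}(N)\;\leq\;\dim G_{\widehat{\bf d}}'+\dim{\rm End}_{kQ}(N)-\dim{\rm End}_{B_Q}(\widehat{M}),
\end{equation*}
with equality precisely when $N\simeq M$.

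The equality case $N\simeq M$ can be handled directly using the canonical exact sequences of the previous theorem: any $F\in\pi^{-1}(M)$ fits in
\begin{equation*}
0\to F_1\to F\to F'\to 0,\qquad 0\to F''\to F_1\to\widehat{M}\to 0,
\end{equation*}
with ${\rm res}\,F'=0={\rm res}\,F''$. Comparing dimension vectors at each non-projective indecomposable $[U]$ gives $\dim F'_{[U]}+\dim F''_{[U]}=\widehat{d}_{[U]}-\dim\widehat{M}_{[U]}=0$, which forces $F'=F''=0$ and $F\simeq\widehat{M}$. Hence $\pi^{-1}(M)$ is a single $G_{\widehat{\bf d}}'$-orbit; since $\Lambda$ splits the restriction map $\Aut_{B_Q}(\widehat{M})\twoheadrightarrow\Aut_{kQ}(M)$, a direct orbit-stabilizer count yields
\begin{equation*}
\dim\pi^{-1}(M)=\dim G_{\widehat{\bf d}}'-\dim{\rm End}_{B_Q}(\widehat{M})+\dim{\rm End}_{kQ}(M),
\end{equation*}
which matches the required equality.

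For $N\not\simeq M$, I would stratify $\pi^{-1}(N)$ by the dimension vectors of the zero-restriction functors $F',F''$ appearing in the canonical sequences; by the identity above these are non-zero in at least one coordinate $[U]$. On each stratum, $F$ is recovered (non-uniquely in general, which only strengthens the resulting bound) from the data of $(F',F'')$, together with an extension class $\xi\in{\rm Ext}^1_{B_Q}(\widehat{N},F'')$ producing $F_1$ and a further class $\eta\in{\rm Ext}^1_{B_Q}(F',F_1)$ producing $F$. The dimension of each stratum is therefore at most the sum of the dimensions of the representation varieties of zero-restriction functors of the prescribed dimension vectors and of these two Ext spaces.

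The main obstacle will be the last step: making this stratum-by-stratum estimate sharp enough to give a strict inequality whenever $(F',F'')\neq 0$. I expect this to require the Auslander-Reiten formula used in the proof of the preceding theorem in order to re-express ${\rm Ext}^1_{B_Q}(\widehat{N},F'')$ and ${\rm Ext}^1_{B_Q}(F',F_1)$ as Hom groups on $\bmod\,kQ$, together with the rigidity ${\rm Ext}^1_{B_Q}(\widehat{M},\widehat{M})=0$ from Corollary \ref{cor1f}; these inputs should force the dimension of every non-trivial stratum to be strictly less than $\dim G_{\widehat{\bf d}}'+\dim{\rm End}_{kQ}(N)-\dim{\rm End}_{B_Q}(\widehat{M})$, establishing the strict inequality.
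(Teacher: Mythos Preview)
The statement you are attempting to prove is explicitly labelled a \emph{Conjecture} in the paper; the authors do not prove it, they only discuss consequences a positive answer would have for normality and Cohen--Macaulayness of orbit closures. So there is no proof in the paper to compare against, and you should be aware that what you are proposing is an attack on an open question, not a rederivation of a known argument.

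Your reduction to a bound on $\dim\pi^{-1}(N)$ is correct, and your treatment of the equality case $N\simeq M$ is sound. In fact it can be said more simply: since every $F$ with $\dimv F=\widehat{\bf d}$ and ${\rm res}\,F\simeq M$ satisfies $F\simeq\widehat{M}$ (this is exactly what your dimension count shows, and it is essentially the Remark following the Corollary in the paper), one has $\pi^{-1}(\mathcal{O}_M)=\mathcal{O}_{\widehat{M}}$ on the nose, giving $\dim\pi^{-1}(\mathcal{O}_M)=\dim\overline{\mathcal{O}_{\widehat{M}}}$ directly.

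The genuine gap is in the strict inequality for $N\not\simeq M$, and you acknowledge this yourself. The stratification by the dimension vectors of $F'$ and $F''$ is a natural organizing device, but the bound you propose, namely
\[
\dim(\text{stratum})\;\leq\;\dim R_{\dimv F'}+\dim R_{\dimv F''}+\dim{\rm Ext}^1_{B_Q}(\widehat{N},F'')+\dim{\rm Ext}^1_{B_Q}(F',F_1),
\]
is not obviously strong enough: the varieties of zero-restriction $B_Q$-modules with a prescribed dimension vector can be large, and the two Ext terms depend on the particular $F'',F_1$ and not just on their dimension vectors, so you would also need upper-semicontinuity arguments and a further maximization step. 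Your hope that the Auslander--Reiten formula and the rigidity ${\rm Ext}^1_{B_Q}(\widehat{M},\widehat{M})=0$ will close the gap is reasonable heuristics, but as written there is no argument connecting these inputs to the needed strict inequality. In short, you have correctly identified the equality case and a plausible framework for the rest, but the crucial estimate remains to be found; this is precisely why the statement is recorded as a conjecture.
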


We discuss briefly the potential applications of a positive answer to this conjecture. Using the arguments of \cite[2.1,2.2]{BoQ}, the homological Euler form of the algebra $B_Q$ (being of global dimension at most two) can be calculated to give $\langle\widehat{\bf d},\widehat{\bf d}\rangle=\dim{\rm End}(M)$ if $\widehat{\bf d}=\dimv\widehat{M}$. It follows that $\overline{\mathcal{O}_{\widehat{M}}}$ is an irreducible component of $R_{\widehat{\bf d}}(B_Q)$, and that every irreducible component has at least this dimension. Assuming the above conjecture, $\overline{\mathcal{O}_{\widehat{M}}}=R_{\widehat{\bf d}}(B_Q)$ is the only irreducible component, which is locally a complete intersection and thus Cohen-Macaulay. A more refined analysis of the strata $\pi^{-1}(\mathcal{O}_N)$ is expected to prove regularity in codimension one of $R_{\widehat{\bf d}}(B_Q)$. This implies normality, and could thus provide a uniform proof of normality of every orbit closure $\overline{\mathcal{O}_M}$. Note that the first statement of Corollary \ref{cor1f} gives an intrinsic description of the stratum $\pi^{-1}(\mathcal{O}_N)$.



\section{The Hernandez-Leclerc construction and graded Nakajima varieties}\label{HL}
In \cite[Section 9.3]{HL} an algebra $\tilde{\Lambda}_Q$ is introduced. 
Let us collect the main ingredients of the Hernandez-Leclerc construction. Let $Q$ be a Dynkin 
quiver with the set of vertices $I$ of cardinality $n$ and let $\g$ be the corresponding simple Lie algebra.
We denote
by $\al_i$, $i\in I$ simple roots, by $\triangle_+$ and $\triangle_-$ the sets of positive and negative 
roots, and by $W$ the Weyl group of $\g$. Let $s_i\in W$, $i\in I$ 
be the simple reflections, $s_i(\la)=\la - (\la,\al)\al_i$. A Coxeter element $C\in W$ is the product
$s_{i_1}s_{i_2}\dots s_{i_n}$ of simple reflections each showing up exactly once. For a quiver $Q$
we denote by $s_iQ$ a new quiver, which is obtained from $Q$ by reversing all arrows at the vertex $i$. 
The Coxeter element $C$ is said to be adapted to $Q$, if $i_1$ is a source of $Q$, $i_2$ is a source of $s_{i_1}Q$,
$i_{k+1}$ is a source of $s_{i_1}\dots s_{i_k}Q$. For example, if $Q=1\to 2\leftarrow 3\to 4$, then 
$C=s_1s_3s_2s_4$.

A height function $\xi:I\to\bZ$ is a function satisfying $\xi_j=\xi_i-1$ if there is an arrow
$i\to j$ in $Q$. For $Q=1\to 2\leftarrow 3\to 4$ a possible height function is 
$\xi_4=1$, $\xi_3=2$, $\xi_2=1$, $\xi_1=2$. Let us define a set
\[
\widehat I=\{(i,p)\in I\times \bZ:\ p-\xi_i\in 2\bZ\}.
\] 
A crucial role in the whole picture is played by the following bijection 
$\varphi: \widehat I\to\triangle_+\times\bZ$. The bijection $\varphi$ is defined as follows.
First, for a vertex $i\in I$ we denote by $\gamma_i\in\triangle_+$ the sum of all simple roots $\al_j$ 
such that
there is a path form $j$ to $i$ in $Q$ (these are exactly the vertices showing up in the injective envelope
of the simple module attached to the vertex $i$). Now $\varphi$ is defined by the rules:
\begin{itemize}
\item $\varphi(i,\xi_i)=(\gamma_i,0)$;
\item Let $\varphi(i,p)=(\beta,m)$. Then $\varphi(i,p-2)=
\begin{cases}
(C(\beta),m), \text{ if } C(\beta)\in\triangle_+;\\
(-C(\beta),m-1), \text{ if } C(\beta)\in\triangle_-.
\end{cases}$
\end{itemize}           

\begin{rem}
It follows that if $\varphi(i,p)=(\beta,m)$ then 
\[\varphi(i,p+2)=
\begin{cases}
(C^{-1}(\beta),m), \text{ if } C^{-1}(\beta)\in\triangle_+;\\
(-C^{-1}(\beta),m+1), \text{ if } C^{-1}(\beta)\in\triangle_-.
\end{cases}\]
\end{rem}

\begin{example}
Let us give an example. Let $Q=1\to 2\to 3\to 4$ and the height function is fixed as $\xi_1=4$,
$\xi_2=3$, $\xi_3=2$, $\xi_4=1$. The underlying Lie algebra is $\msl_5$ and $\triangle_+$
consists of roots 
$$\al_{i,j}=\al_i+\al_{i+1}+\dots +\al_j,\ 1\le i\le j\le 4.$$
The adapted Coxeter element $C$ is equal to $s_1s_2s_3s_4$.  
Let us draw the set $\widehat I$ as follows:
\medskip
\[
\begin{tabular}{cccc}
(1,6) &  & (3,6) &  \\
 & (2,5) &  & (4,5) \\
(1,4) &  & (3,4) &  \\
 & (2,3) &  & (4,3) \\
(1,2) &  & (3,2) &  \\
 & (2,1) &  & (4,1) \\
(1,0) &  & (3,0) &  \\
 & (2,-1) &  & (4,-1) \\
(1,-2) &  & (3,-2) &  \\
 & (2,-3) &  & (4,-3)  \\
(1,-4) &  & (3,-4) &  
\end{tabular}
\]
Then the corresponding images $\varphi(i,p)$ look as follows:
\medskip
\[
\begin{tabular}{cccc}
$(\al_{1,4},1)$ &               & $(\al_{2,3},1)$ &               \\
              & $(\al_{2,4},1)$ &               & $(\al_{3,3},1)$ \\
$(\al_{1,1},0)$ &               & $(\al_{3,4},1)$ &               \\
              & $(\al_{1,2},0)$ &               & $(\al_{4,4},1)$ \\
$(\al_{2,2},0)$ &               & $(\al_{1,3},0)$ &               \\
              & $(\al_{2,3},0)$ &               & $(\al_{1,4},0)$ \\
$(\al_{3,3},0)$ &               & $(\al_{2,4},0)$ &               \\
              & $(\al_{3,4},0)$ &               & $(\al_{1,1},-1)$ \\
$(\al_{4,4},0)$ &               & $(\al_{1,2},-1)$&                \\
              & $(\al_{1,3},-1)$&               & $(\al_{2,2},-1)$ \\ 
$(\al_{1,4},-1)$ &               & $(\al_{2,3},-1)$&                
\end{tabular}
\]
\end{example}

In \cite{HL} the authors define the following graph $\widetilde \Gamma_Q$. 
The vertices of
$\widetilde \Gamma_Q$ are of two sorts: 
\[
w_j(p), \text{ where } \varphi(j,p)=(\al_i,0) \text{ for some } i\in I
\]
and
\[
v_j(p-1), \text{ where } \varphi(j,p)\in\triangle_+\times \{0\} \text{ and } 
\varphi(j,p-2)\in\triangle_+\times \{0\}.
\]
In particular, the number of vertices $w_j(p)$ is equal to the number of simple roots 
(the number of vertices of $Q$) and 
(as we will see) the number of vertices $v_j(p-1)$ is equal to the number of positive roots minus 
the number of simple roots.
In the example above ($Q$ equioriented type $A_4$) the vertices are
\[
w_1(4), w_1(2), w_1(0), w_1(-2), v_1(3), v_1(1), v_1(-1), v_2(2), v_2(0), v_3(1).
\] 

\begin{rem}
Recall that our $\widehat Q$ from \cite{CFR3} also has two sorts of vertices: vertices $[i]$, $i\in I$ and
vertices $[U]$, that correspond to indecomposable non-projective $U$. In what follows we 
denote the indecomposable representation of $kQ$ corresponding to a positive root $\beta$ by $U_\beta$.
Also for an indecomposable representation $U$ we denote by $\mathbf{dim }\,U\in\triangle_+$ the root
$\sum \dim U_i\al_i$.   
\end{rem}

\begin{lem}
Let $\varphi(j,p)=(\beta,0)\in\triangle_+\times \{0\}$. Then 
$\varphi(j,p-2)\in\triangle_+\times \{0\}$ if and only if $U_\beta$ is non-projective.
\end{lem}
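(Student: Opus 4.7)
The plan is to reduce the lemma to the classical description of the adapted Coxeter element as the Auslander--Reiten translate on dimension vectors.

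By the recursive definition of $\varphi$ recalled above, we have $\varphi(j,p-2)=(C(\beta),0)$ when $C(\beta)\in\triangle_+$, and $\varphi(j,p-2)=(-C(\beta),-1)$ otherwise. Consequently $\varphi(j,p-2)\in\triangle_+\times\{0\}$ if and only if $C(\beta)\in\triangle_+$, so the lemma is equivalent to the assertion that, for $\beta\in\triangle_+$, the root $C(\beta)$ is positive if and only if $U_\beta$ is non-projective.

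I would then invoke the classical result of Bernstein--Gelfand--Ponomarev and Gabriel: via the identification of the root lattice with the Grothendieck group $K_0(\bmod\, kQ)$ by $\al_i\leftrightarrow[S_i]$, the adapted Coxeter element $C$ acts as the Coxeter transformation of $kQ$. This transformation is characterised by $C(\mathbf{dim}\, U)=\mathbf{dim}\,\tau_Q U$ whenever $U$ is a non-projective indecomposable, and by $C(\mathbf{dim}\, P_i)=-\mathbf{dim}\, I_i=-\gamma_i$ for every indecomposable projective $P_i$. Since $\tau_Q U_\beta$ is a non-zero indecomposable when $U_\beta$ is non-projective, its dimension vector is a positive root; conversely, if $U_\beta\simeq P_i$ then $C(\beta)=-\gamma_i$ lies in $\triangle_-$. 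Thus $C(\beta)\in\triangle_+$ exactly when $U_\beta$ is non-projective, proving the lemma.

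For a self-contained derivation of the displayed identity, one can use the reflection functors $F^+_{i_k}$ of Bernstein--Gelfand--Ponomarev: adaptedness of $C=s_{i_1}\cdots s_{i_n}$ guarantees that the composite $F^+_{i_n}\circ\cdots\circ F^+_{i_1}$ is a well-defined endofunctor of $\bmod\, kQ$ realising $\tau_Q$, and each $F^+_{i_k}$ acts on dimension vectors by the simple reflection $s_{i_k}$, except on the simple it annihilates at the corresponding step -- which is exactly the case where the indecomposable being traced is projective, producing a sign change. The only bookkeeping that requires care is matching the paper's convention (with $i_1$ a source of $Q$) to the correct direction of composition so that $C$ itself, rather than $C^{-1}$, corresponds to $\tau_Q$; the worked example above confirms this is the right convention. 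Beyond this notational check, there is no real obstacle.
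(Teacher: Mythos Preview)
Your argument is correct and follows essentially the same reduction as the paper: both observe from the recursion for $\varphi$ that the condition $\varphi(j,p-2)\in\triangle_+\times\{0\}$ is equivalent to $C(\beta)\in\triangle_+$, and then identify the positive roots sent to $\triangle_-$ by $C$ as precisely the dimension vectors of the indecomposable projectives. The only difference is the source invoked for this last fact: the paper cites Proposition~4.1 of Kirillov--Thind, whereas you use the classical identity $C(\mathbf{dim}\,U)=\mathbf{dim}\,\tau_Q U$ for non-projective indecomposables (together with $C(\mathbf{dim}\,P_i)=-\mathbf{dim}\,I_i$). This is exactly the identity the paper itself uses in the proof of the next lemma (citing Ringel and Assem--Simson--Skowro\'nski), so your route is arguably more self-contained within the paper's own toolkit.
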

\begin{proof}
The conditions $\varphi(j,p)=(\beta,0)\in\triangle_+\times \{0\}$ and 
$\varphi(j,p-2)\notin\triangle_+\times \{0\}$ mean that $C(\beta)\in\triangle_-$. Hence we are
looking for positive roots $\beta$ such that $C$ maps them to a negative root.
We want to show that these roots correspond to the ones defining projective indecomposable modules.
This is proved in Proposition 4.1  of \cite{KT} (the dual version of that, to be precise). 
\end{proof}

Recall (see \cite{HL}) that the arrows of $\widetilde \Gamma_Q$ are of three types:
\begin{itemize}
\item $a_j(p): w_j(p)\to v_j(p-1)$,
\item $b_j(p): v_j(p)\to w_j(p-1)$,
\item $B_{ij}(p): v_i(p)\to v_j(p-1)$ if there is an arrow $i\to j$ or $j\to i$.
\end{itemize}
\begin{rem}
In $\widehat Q$ we also have three types of arrows.
\end{rem}

\begin{example}
Let $Q=1\to 2\to 3\to 4$ and $\xi_1=4$,
$\xi_2=3$, $\xi_3=2$, $\xi_4=1$ as above. Then the quiver $\widetilde \Gamma_Q$ looks as follows:
$$
\xymatrix{
w_1(4)\ar[rd]_{a_1(4)} & & w_1(2)\ar[rd]_{a_1(2)} & & w_1(0)\ar[rd]_{a_1(0)} & & w_1(-2)\\
& v_1(3)\ar[ru]_{b_1(3)} \ar[rd]_{B_{12}(3)} & & v_1(1)\ar[ru]_{b_1(1)} \ar[rd]_{B_{12}(1)} & 
& v_1(-1) \ar[ru]_{b_1(-1)}& \\
& & v_2(2)\ar[ru]_{B_{21}(2)} \ar[rd]_{B_{23}(2)} & & v_2(0)\ar[ru]_{B_{21}(0)} & &\\
& & & v_3(1)\ar[ru]_{B_{32}(1)} & & &
}
$$
\end{example}

\begin{example}
Let $Q=1\to 2\leftarrow 3$ and $\xi_1=2$, $\xi_2=1$, $\xi_3=2$. Then $C=s_1s_3s_2$ and the bijection
$\varphi:\widehat I\to \triangle_+\times\bZ $ looks as follows:
\[
\begin{tabular}{ccc}
 (1,4) &  & (3,4) \\
  & (2,3) & \\
 (1,2) &  & (3,2) \\
 & (2,1) &  \\
 (1,0) &  & (3,0) \\
 & (2,-1) &  \\
 (1,-2) &  & (3,-2)\\
 & (2,-3) &  
 \end{tabular}
\qquad \longrightarrow \qquad 
\begin{tabular}{ccc}
 $(\al_{12},1)$ &  & $(\al_{23},1)$ \\
  & $(\al_{22},1)$ & \\
 $(\al_{11},0)$ &  & $(\al_{33},0)$ \\
 & $(\al_{13},0)$ &  \\
 $(\al_{23},0)$ &  & $(\al_{12},0)$ \\
 & $(\al_{22},0)$ &  \\
 $(\al_{33},-1)$ &  & $(\al_{11},-1)$\\
  & $(\al_{13},-1)$ &   
\end{tabular}
\]
Then we have vertices $w_1(2)$, $w_2(-1)$, $w_3(2)$, $v_1(1)$, $v_2(0)$, $v_3(1)$ and
the quiver $\widetilde \Gamma_Q$ looks as follows:
$$
\xymatrix{
w_1(2)\ar[r]^{a_1(2)} & v_1(1)\ar[r]^{B_{21}(1)} & v_2(0)\ar[d]^{b_2(0)} & v_3(1)\ar[l]_{B_{32}(1)} &
w_3(2)\ar[l]_{a_3(2)}\\
& & w_2(-1) & &
}
$$
This picture agrees with the corresponding example in \cite{CFR3}. 
\end{example}

\begin{lem}\label{lem}
$a)$.\ Let $\varphi(j,p)=(\beta,0)$ and $\varphi(j,p-2)=(\gamma,0)$, $\beta,\gamma\in\triangle_+$. 
Then $\tau U_\beta=U_\gamma$.\\
$b)$.\ Assume that the vertices $i,j\in I$ are connected by an arrow $i\to j$ or $j\to i$. 
Let $\varphi(i,p)=(\beta,0)$ and $\varphi(j,p-1)=(\gamma,0)$.
Then there is an irreducible map $U_\gamma\to U_\beta$.
\end{lem}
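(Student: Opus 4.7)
The strategy is to reduce both claims to classical Auslander--Reiten theoretic facts about $\bmod\,kQ$, using two standard inputs. First, that the adapted Coxeter element $C$ implements the AR-translate on dimension vectors: for every non-projective indecomposable $U_\beta$ one has $\dimv\,\tau U_\beta = C(\beta)$. This is precisely the fact invoked via \cite{KT} in the proof of the previous lemma. Second, that the AR-quiver of $kQ$ is a full translation subquiver of the repetition quiver $\bZ Q$, with the injective slice $\{I_i\}_{i\in I}$ carrying irreducible morphisms $I_j\to I_i$ for each arrow $i\to j$ of $Q$, and with $\tau$ acting as the combinatorial translation (see e.g. \cite{ASS}).

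Part (a) is then essentially immediate: by the recursion defining $\varphi$, the hypothesis forces $\gamma = C(\beta)$; the previous lemma gives that $U_\beta$ is non-projective; the first fact then yields $\tau U_\beta = U_{C\beta} = U_\gamma$. For part (b), I would argue by downward induction on $p$. The base case is at the injective slice: for an arrow $i\to j$ in $Q$ one has $\xi_j = \xi_i - 1$, so at $p = \xi_i$ the claim reduces to $\beta = \gamma_i$, $\gamma = \gamma_j$, and the irreducible morphism $I_j\to I_i$ is immediate from the injective slice. For an arrow $j\to i$, the parity condition $p\equiv \xi_i \pmod{2}$ together with $\xi_j = \xi_i + 1$ forces the uppermost admissible $p$ to still be $\xi_i$, at which $\gamma = C\gamma_j = \dimv\,\tau I_j$ (in particular $I_j$ must be non-projective, for otherwise no level-$0$ pair exists in the first place and the case is vacuous); the required irreducible morphism $\tau I_j\to I_i$ appears as a direct summand of the middle term of the AR-sequence ending at $I_i$ (or of $\mathrm{rad}\,I_i$ if $I_i$ is projective), as prescribed by the mesh relations at vertex $i$ of $Q$. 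The inductive step is then automatic: if the claim holds at $(p,p-1)$ and both $(p-2,p-3)$ remain at level $0$, the previous lemma guarantees that $U_\beta$ and $U_\gamma$ are non-projective, so $\tau$ sends the irreducible morphism $U_\gamma\to U_\beta$ to an irreducible morphism $\tau U_\gamma \to \tau U_\beta$, which by part (a) is exactly $U_{C\gamma}\to U_{C\beta}$, i.e.\ the claim at heights $(p-2,p-3)$.

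The main obstacle is the careful case analysis for the base step: the two orientations of the edge between $i$ and $j$ behave asymmetrically with respect to the height function $\xi$, and one must track when certain injectives happen to be projective (in which case the claimed level-$0$ pair either reduces to a degenerate base case or does not exist at all, making the case vacuous). Once this bookkeeping is done, the two key inputs -- that irreducible morphisms between non-projective indecomposables are preserved by $\tau$, and that the AR-quiver of $kQ$ is completely described by the injective slice together with the mesh relations of $\bZ Q$ -- supply everything that is needed.
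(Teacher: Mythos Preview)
Your proof is correct and follows the same approach as the paper: part (a) via $\gamma = C\beta = \dimv\,\tau U_\beta$, and part (b) by establishing the irreducible morphism at the injective slice and then propagating via $\tau$. In fact you are more thorough than the paper, which for (b) only writes out the base case for the orientation $i\to j$ (observing $U_\beta = I_i$, $U_\gamma = I_j$, and $I_i$ a summand of $I_j/\mathrm{soc}\,I_j$), leaving both the other orientation and the inductive step implicit.
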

\begin{proof}
By definition $\gamma=C\beta=C\mathbf{dim }\,U_\beta=\mathbf{dim }\,\tau U_\beta$ 
(for the last equality see e.g. \cite[2.4~(4)]{R} or \cite[lemma~5.8]{ASS}). Since the indecomposable $Q$--representations are uniquely determined by their dimension vector, the claim follows. To prove part (b) let us assume $i\rightarrow j$. Then by definition $U_\beta=I_i$ and $U_\gamma=I_j$ (where $I_k$ denotes the injective envelope of the simple at vertex $k$). In particular, $I_i$ is an indecomposable direct summand of $I_j/\rm{soc }\,I_j$, and hence there is an irreducible map 
$I_j\rightarrow I_i$ (see e.g. \cite[remark IV.4.3 (b)]{ASS}).
\end{proof}

\begin{prop}
The identification 
$$w_j(p)\to [i], \ \varphi(j,p)=(\al_i,0) \text{ and } v_j(p-1)\to [U_\beta],\ \varphi(j,p)=(\beta,0)$$
defines an isomorphism between the quiver $\widetilde \Gamma_Q$ from
\cite{HL} and $\widehat Q$ from \cite{CFR3}.
\end{prop}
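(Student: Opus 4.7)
My plan is to verify the identification in three stages: bijectivity on vertices, matching of the $a$- and $b$-type arrows, and then the more substantial argument that the $B$-arrows capture all irreducible maps between non-projective indecomposables.

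For the first stage, since $\varphi$ is itself a bijection $\widehat I \to \triangle_+ \times \bZ$, each simple root $\alpha_i \in \triangle_+ \times \{0\}$ determines a unique preimage $(j,p)$, so $w_j(p) \mapsto [i]$ is a bijection onto the $n$ vertices $\{[i]\}_{i\in I}$ of $\widehat Q$. The lemma identifying $v$-vertices with non-projectives gives, together with $\varphi$, a bijection $v_j(p-1) \mapsto [U_\beta]$ onto the $[U]$-vertices of $\widehat Q$.

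For the second stage, I would trace through the existence conditions. The arrow $a_j(p)$ exists precisely when $\varphi(j,p) = (\alpha_i,0)$ and $\varphi(j,p-2) \in \triangle_+ \times \{0\}$, equivalently $C(\alpha_i) \in \triangle_+$. Since $C$ implements $\tau$ on dimension vectors of non-projective modules, this amounts to $\tau S_i \neq 0$, i.e.\ $S_i$ not injective, i.e.\ $i$ not a sink of $Q$; under the identification the arrow becomes $[i] \to [U_{\alpha_i}] = [S_i]$, matching the arrows of this type in $\widehat Q$. Dually, $b_j(p)$ exists iff $i$ is not a source, and writing $\varphi(j,p+1) = (\beta,0)$, Lemma \ref{lem}(a) gives $U_\beta = \tau^{-1} S_i$, so the arrow becomes $[\tau^{-1} S_i] \to [i]$.

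The third stage is the main one. Lemma \ref{lem}(b) immediately shows that each $B_{ij}(p)$, with $\varphi(i,p+1)=(\beta,0)$ and $\varphi(j,p)=(\gamma,0)$, gives rise to an arrow $[U_\beta]\to[U_\gamma]$ in $\widehat Q$ coming from an irreducible map $U_\gamma\to U_\beta$. The hard point is surjectivity: every irreducible map between non-projective indecomposables should arise from a unique such $B$-arrow. My approach is to exploit that the AR-quiver of $kQ$, restricted to non-projective indecomposables, is a translation quiver which is determined by a single slice together with the $\tau$-action. The slice of injectives $\{I_i\}_{i\in I}$ is a copy of $Q^{\mathrm{op}}$, and by Lemma \ref{lem}(b) this slice is matched via $\varphi$ with the $B$-arrows between the vertices $\varphi^{-1}(\gamma_i,0)$; Lemma \ref{lem}(a) then identifies the AR translate $\tau$ with the shift $(j,p) \mapsto (j,p-2)$ on $v$-vertices, propagating the identification to all $\tau$-translates and showing that the $B$-arrows exhaust the irreducible maps between non-projective indecomposables. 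This completes the isomorphism $\widetilde\Gamma_Q \simeq \widehat Q$.
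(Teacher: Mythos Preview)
Your proof follows the same route as the paper's—matching each of the three arrow types via the preceding lemma—while additionally spelling out the vertex bijection and the surjectivity of the $B$-arrow correspondence via the translation-quiver structure of the AR-quiver, points the paper leaves implicit. One small slip: in your second stage, $\tau S_i \neq 0$ means $S_i$ is non-\emph{projective} (not non-injective), but since that is indeed equivalent to $i$ not being a sink, your conclusion stands.
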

\begin{proof}
We need to show that the arrows $a_j(p)$, $b_j(p)$ and $B_{ij}(p)$ are in correspondence with the 
arrows in $\widehat Q$. 

First, let us look at $a_j(p): w_j(p)\to v_j(p-1)$. We know that
$\varphi(j,p)=(\al_i,0)$ (from the definition of $w_j(p)$) and hence $v_j(p-1)$ corresponds 
to $U_{\alpha_i}$ (the simple representation attached to the vertex $i$). We thus get an arrow 
$[i]\to [U_{\alpha_i}]$, which is indeed present in $\widehat Q$.

Second, let us look at $b_j(p): v_j(p)\to w_j(p-1)$. We know that
$\varphi(j,p-1)=(\al_i,0)$. The vertex $v_j(p)$ corresponds to the pair $(j,p+1)\in \widehat I$.
Let $\varphi(j,p+1)=\beta$. From Lemma \ref{lem}, part $a)$, we know that $U_\beta=\tau^{-1} U_{\alpha_i}$.
We thus obtain an arrow $[\tau^{-1} U_{\alpha_i}]\to [i]$, which is indeed present in $\widehat Q$.   

Finally we consider the arrows $B_{i,j}(p): v_i(p)\to v_j(p-1)$, where $i$ and $j$ are connected by
an arrow ($i\to j$ or $j\to i$). The existence of the corresponding arrows in $\widehat Q$ follows from
Lemma \ref{lem}, part $b)$.
\end{proof}

Finally, the algebra $\tilde{\Lambda}_Q$ is defined in \cite[Section 9.3]{HL} as the path algebra of the quiver $\widetilde \Gamma_Q$ subject to the relations
\begin{equation}\label{HLrelation} a_i(p-1)b_i(p)=\sum_{j-i}\epsilon(i,j)B_{ji}(p-1)B_{ij}(p),
\end{equation}
for all vertices $i$ and all $p$, where the sum ranges over all vertices $j$ adjacent to $i$, and $\epsilon(i,j)$ is an appropriate sign. For our purposes, a modified form of these relations (without the signs) will be more suitable:

\begin{lem} The algebra $\tilde{\Lambda}_Q$ is isomorphic to the path algebra of the quiver $\widetilde\Gamma_Q$ subject to the relations
\begin{equation}\label{HLrelationwosign} a_i(p-1)b_i(p)=\sum_{j-i}B_{ji}(p-1)B_{ij}(p),
\end{equation}
for all vertices $i$ and all $p$, where the sum ranges over all vertices $j$ adjacent to $i$.
\end{lem}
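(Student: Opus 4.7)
The plan is to construct the desired isomorphism by rescaling the horizontal arrows $B_{ij}(p)$ of $\widetilde\Gamma_Q$ by suitable signs, while keeping the arrows $a_i(p)$ and $b_i(p)$ fixed. Concretely, for each arrow $B_{ij}(p)$ I would introduce a scalar $c_{ij}(p)\in\{\pm 1\}$ and consider the algebra endomorphism $\Phi$ of the path algebra $k\widetilde\Gamma_Q$ defined on generators by
\[
\Phi(B_{ij}(p))=c_{ij}(p)\,B_{ij}(p),\quad \Phi(a_i(p))=a_i(p),\quad \Phi(b_i(p))=b_i(p).
\]
Such a $\Phi$ is automatically an algebra automorphism of $k\widetilde\Gamma_Q$, with inverse obtained from the reciprocal signs $c_{ij}(p)^{-1}=c_{ij}(p)$. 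Under $\Phi$ the signed relation \eqref{HLrelation} indexed by a vertex $i$ and parameter $p$ is sent to
\[
a_i(p-1)b_i(p)-\sum_{j-i}\epsilon(i,j)\,c_{ji}(p-1)c_{ij}(p)\,B_{ji}(p-1)B_{ij}(p),
\]
which coincides with the unsigned relation \eqref{HLrelationwosign} exactly when
\[
\epsilon(i,j)\,c_{ji}(p-1)\,c_{ij}(p)=1
\]
for every edge $\{i,j\}$ of $Q$ and every admissible parameter $p$.

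The second step is to exhibit a global sign system $\{c_{ij}(p)\}$ satisfying all of these constraints. I would observe that every variable $c_{ij}(p)$ appears in exactly two of the equations above: the one indexed by $(i,p)$ couples it with $c_{ji}(p-1)$, and the one indexed by $(j,p+1)$ couples it with $c_{ji}(p+1)$. Hence, for each edge $\{i,j\}$ of $Q$, the constraint graph on the variables $c_{ij}(\cdot),c_{ji}(\cdot)$ decomposes into disjoint (possibly finite) chains separated by parity and is in particular acyclic. An acyclic system of multiplicative $\{\pm 1\}$-equations is always solvable: pick the value of one variable in each connected component freely and propagate along the chain, using each constraint to determine the next variable from the previous one. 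Making such a choice in every connected component produces the required $c_{ij}(p)$.

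Passing to quotients, $\Phi$ then descends to an algebra isomorphism between $\tilde\Lambda_Q$ and the path algebra of $\widetilde\Gamma_Q$ modulo the unsigned relations \eqref{HLrelationwosign}, with inverse induced by $\Phi^{-1}$. I do not foresee any serious obstacle; the only real content is the acyclicity of the constraint graph, which follows at once from the local combinatorics of $\widetilde\Gamma_Q$ around each pair of $v$-vertices. In particular the Dynkin property of $Q$ is not used at this stage, and the argument is purely formal.
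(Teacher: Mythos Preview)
Your argument is correct and is essentially the same as the paper's. Both proofs rescale only the arrows $B_{ij}(p)$ by signs, obtain the compatibility condition $\epsilon(i,j)\,c_{ji}(p-1)\,c_{ij}(p)=1$, and then observe that for each edge $\{i,j\}$ these constraints link the variables in a finite path $\cdots\, c_{ij}(p)\, -\, c_{ji}(p-1)\, -\, c_{ij}(p-2)\, \cdots$, hence are solvable by choosing a value at one end and propagating; the paper phrases this last step concretely (``take the maximal $p$ such that $B_{i,j}(p)$ exists, choose the sign there arbitrarily, and define the remaining signs inductively'') while you phrase it as acyclicity of the constraint graph.
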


\begin{proof} We twist the arrow $B_{i,j}(p)$ by a sign $\epsilon(i,j,p)$. Then the sign $\epsilon(i,j)$ in (\ref{HLrelation}) vanishes and gives the desired relation (\ref{HLrelationwosign}) if and only if
$\epsilon(j,i,p-1)\epsilon(i,j,p)\epsilon(i,j)=1$
for all $i,j,p$, which can be rewritten as
$$\epsilon(j,i,p-1)=\epsilon(i,j)\epsilon(i,j,p).$$
This gives a way to define the correct signs inductively: for every pair of adjacent vertices $i,j$ in $Q$, we take the maximal $p$ such that $B_{i,j}(p)$ exists and choose $\epsilon(i,j,p)$ arbitrarily. Then for all smaller $q$, the above relation tells us how to define the sign $\epsilon(i,j,q)$. 
\end{proof}

\begin{thm} 
The algebra $\tilde{\Lambda}_Q$ is isomorphic to the algebra $B_Q$.
\end{thm}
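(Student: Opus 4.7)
By the previous proposition, the quivers $\widetilde{\Gamma}_Q$ and $\widehat{Q}$ are canonically identified, so both $\tilde{\Lambda}_Q$ and $B_Q$ are quotients of $k\widehat{Q}$ by admissible ideals; my task is to show these two ideals coincide. My plan is in two steps. First, I would verify that the Hernandez-Leclerc relations \eqref{HLrelationwosign} hold in $B_Q$, which produces a surjective algebra homomorphism $\varphi: \tilde{\Lambda}_Q \twoheadrightarrow B_Q$. Second, I would show that $\varphi$ is injective, hence an isomorphism.

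For the first step I would reinterpret \eqref{HLrelationwosign} in $\widehat{Q}$ using Lemma~\ref{lem}. At a vertex $v_i(p)$, let $k$ be the unique vertex of $Q$ with $\varphi(i,p-1)=(\alpha_k,0)$. Then \eqref{HLrelationwosign} becomes an identity of length-two paths in $\widehat{Q}$ from $[\tau^{-1}S_k]$ to $[S_k]$: the left-hand side is the path factoring through the vertex $[k]$, while the right-hand side is the sum of paths factoring through $[U_j]$ for $U_j$ running over the indecomposable summands of the middle term $E$ of the AR sequence $0\to S_k \to E\to \tau^{-1}S_k \to 0$, indexed by the vertices $j$ of $Q$ adjacent to $i$ via Lemma~\ref{lem}(b). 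To check this identity in $B_Q=\mathrm{End}_{\mathcal{H}_Q}(A)^{\mathrm{op}}$, I would compute both compositions explicitly in $\mathcal{H}_Q$, using canonical generators for each arrow of $\widehat{Q}$ (for example the arrow $[i]\to[S_i]$ corresponds to the morphism $(\iota_{S_i},\mathrm{id}_{Q_{S_i}})$ from $(P_{S_i}\subset Q_{S_i})$ to $(P_i=P_i)$). The horseshoe lemma applied to the AR sequence then provides a comparison diagram exhibiting both sides of \eqref{HLrelationwosign} as the same canonical morphism $(P_{S_k}\subset Q_{S_k}) \to (P_{\tau^{-1}S_k}\subset Q_{\tau^{-1}S_k})$ in $\mathcal{H}_Q$.

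For the second step I would compare dimensions, expressing $\dim B_Q=\sum_{X,Y}\dim\mathrm{Hom}_{\mathcal{H}_Q}(X,Y)$ over indecomposable objects of $\mathcal{H}_Q$ (computable via AR theory) against the dimension of $\tilde{\Lambda}_Q$ computed from the HL framework. An arguably cleaner alternative is to combine Corollary~\ref{cor1f} with the HL characterization of $\tilde{\Lambda}_Q$-modules: both algebras are basic, and their module categories control the same affine-quotient geometry of $R_{{\bf d}}(Q)$ (Theorem~\ref{2} on one side and \cite[Section~9]{HL} on the other), forcing Morita equivalence and hence isomorphism. I expect the main obstacle to lie precisely in this completeness step: step one is a diagram chase once the dictionary between arrows of $\widehat{Q}$ and morphisms in $\mathcal{H}_Q$ has been pinned down, but ruling out further relations in $B_Q$ requires either careful AR bookkeeping for the dimension count or the more conceptual Morita-type argument.
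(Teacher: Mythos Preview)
Your step~1 is reasonable and, with some bookkeeping (you should also treat the case where no vertex $w_i(p-1)$ exists, which produces the pure mesh relations rather than the commutativity square you describe), would yield the surjection $\tilde{\Lambda}_Q\twoheadrightarrow B_Q$. The real issue is step~2, and here your two proposed alternatives are not on equal footing. The dimension count is feasible but unpleasant; the ``Morita-type'' argument, however, is not a proof: the fact that $\tilde{\Lambda}_Q$ and $B_Q$ both realize orbit closures in $R_{\bf d}(Q)$ as affine quotients of their representation varieties does not force their module categories to be equivalent, and even if it did, passing from Morita equivalence to isomorphism requires knowing both algebras are basic, which for $\tilde{\Lambda}_Q$ is not given a priori in \cite{HL}. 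So as written, your step~2 has a genuine gap.

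The paper avoids this completeness problem entirely by a homological argument that you are missing: since $B_Q$ has global dimension at most two (established in \cite{CFR3}), a minimal set of relations for $B_Q$ is detected precisely by ${\rm Ext}^2$ between simple $B_Q$-modules, by \cite{BoQ}. One then shows that the simples $S_{P_i=P_i}$ have projective and injective dimension at most one, so only ${\rm Ext}^2(S_{P_U\subset Q_U},S_{P_V\subset Q_V})$ can be nonzero; a computation using the Auslander--Reiten sequence ending in $U$ shows this group is one-dimensional exactly when $V=\tau U$ and zero otherwise. Hence $B_Q$ has \emph{exactly one} relation from $[U]$ to $[\tau U]$ for each non-projective indecomposable $U$, and no others. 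This reduces the problem to identifying each such relation, which the paper does via the stable Hom description $\underline{\rm Hom}_{\mathcal{H}_Q}((P\subset Q),(R\subset S))\simeq{\rm Hom}_{kQ}(Q/P,S/R)$: when no vertex $[j]$ sits between $[U]$ and $[\tau U]$ one obtains the mesh relation, and when there is such a vertex one gets the commutativity square you describe. This matches \eqref{HLrelationwosign} on the nose and makes your injectivity step unnecessary.
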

\begin{proof} We compute defining relations for the algebra $B_Q$. Since $B_Q$ is of global dimension at most two, 
we can localize the relations by computing ${\rm Ext}^2$ between simple objects by \cite[Section 1]{BoQ}, which in the language 
of contravariant functors on $\mathcal{H}_Q$ are the $S_{P_U\subset Q_U}$ for $U$ a non-projective 
indecomposable over $kQ$, and the $S_{P_i=P_i}$ for the vertices $i$ of $Q$. It is shown in \cite{CFR3} that 
the $S_{P_i=P_i}$ have projective and injective dimension at most one, thus it suffices to compute 
${\rm Ext}^2(S_{P_U\subset Q_U},S_{P_V\subset Q_V})$ for two non-projective indecomposables $U$ and $V$ over 
$kQ$. Moreover, it is shown in \cite{CFR3} that ${\rm Ext}^*(S_{P_U\subset Q_U},F)$ can be computed as the 
homology of the complex
$$F(P_U\subset Q_U)\rightarrow F(P_B\subset Q_B)\rightarrow F(P_{\tau U}\subset Q_{\tau U}),$$
where $0\rightarrow\tau U\rightarrow B\rightarrow U\rightarrow 0$ is the Auslander-Reiten sequence ending in $U$, 
and $P_B=P_U\oplus P_{\tau U}$, $Q_B=Q_U\oplus Q_{\tau U}$. Thus, if
${\rm Ext}^2(S_{P_U\subset Q_U},S_{P_V\subset Q_V})$ is non-zero, we have $V=\tau U$, in which case
${\rm Ext}^2(S_{P_U\subset Q_U},S_{P_V\subset Q_V})$ is one-dimensional. Thus to define the algebra $B_Q$ 
as a quotient of $k\widehat{Q}$, it suffices to determine a single relation involving paths from the vertex $[U]$
to the vertex $[\tau U]$. To do this, we compare morphisms in $\mathcal{H}_Q$ and in $\bmod\, kQ$. By \cite{CFR3}, 
we have an isomorphism 
$$\underline{\rm Hom}_{\mathcal{H}_Q}((P\subset Q),(R\subset S))\simeq{\rm Hom}_{kQ}(Q/P,S/R),$$ 
where $\underline{\rm Hom}_{\mathcal{H}_Q}((P\subset Q),(R\subset S))$ is defined as the quotient of 
${\rm Hom}_{\mathcal{H}_Q}((P\subset Q),(R\subset S))$ by the subspace generated by morphisms factoring through 
an object of $\mathcal{H}_Q$ of the form $(T=T)$. In particular, we have 
${\rm Hom}_{\mathcal{H}_Q}(P_U\subset Q_U,P_V\subset Q_V)\simeq{\rm Hom}(U,V)$ if there exists an irreducible 
map from $U$ to $V$, and 
${\rm Hom}_{\mathcal{H}_Q}(P_{\tau U}\subset Q_{\tau U},P_U\subset Q_U)\simeq{\rm Hom}(\tau U,U)$ 
if there is no vertex $[j]$ between $[\tau U]$ and $[U]$ in $\widehat{Q}$. First let us assume that this is 
the case. Then the relation in $B_Q$ between $[\tau U]$ and $[U]$ is the mesh relation, that is, the sum over 
all paths from $[\tau U]$ to $[U]$ is zero. Otherwise, let us assume that there is a vertex $[j]$ between 
$[\tau U]$ and $[U]$. Then, by definition of $\widehat{Q}$, we have $U=S_i$ and $\tau U=\tau S_i=S_j$ for 
vertices $i$ and $j$ of $Q$. In particular, we have an Auslander-Reiten sequence 
$0\rightarrow S_j\rightarrow B\rightarrow S_i\rightarrow 0$ in $\bmod\, kQ$, so that there exists an 
arrow $i\rightarrow j$ in $Q$ and $B$ is the two-dimensional indecomposable supported on $i$ and $j$. 
A direct computation using the projective resolution 
$0\rightarrow\bigoplus_{j\rightarrow k}P_k\rightarrow P_j\rightarrow S_j\rightarrow 0$ of $S_j$ 
(and similarly for $S_i$) shows that in this case we have a commutativity relation for the commutative 
square
$$\begin{array}{ccccc}&&[B]&&\\ &\nearrow&&\searrow&\\ {[S_i]}&&&&[\tau S_i=S_j]\\ &\searrow&&\nearrow&\\ &&[j].&&\end{array}$$
We conclude that the defining relations of $B_Q$ coincide with the defining relations \eqref{HLrelationwosign} of $\tilde{\Lambda}_Q$ under the identification between the quivers $\widetilde \Gamma_Q$ and $\widehat Q$.
\end{proof}

\section*{Acknowlegments}
The authors would like to thank K. Bongartz, B. Leclerc, B. Keller and S. Scherotzke  for helpful discussions.

The work of Evgeny Feigin was partially supported
by the Russian President Grant MK-3312.2012.1, by the Dynasty Foundation,
by the AG Laboratory HSE, RF government grant, ag. 11.G34.31.0023, by the RFBR grants
12-01-00070, 12-01-00944, 12-01-33101 and by the Russian Ministry of Education and Science under the
grant 2012-1.1-12-000-1011-016.
This study comprises research fundings from the `Representation Theory
in Geometry and in Mathematical Physics' carried out within The
National Research University Higher School of Economics' Academic Fund Program
in 2012, grant No 12-05-0014.
This study was carried out within the National Research University Higher School of Economics
Academic Fund Program in 2012-2013, research grant No. 11-01-0017.

\bibliographystyle{amsplain}

\end{document}